\DeclareFontFamily{OT1}{pzc}{}
\DeclareFontShape{OT1}{pzc}{m}{it}%
              {<-> s * pzcmi8t}{}
\DeclareMathAlphabet{\mathpzc}{OT1}{pzc}%
                                {m}{it}
\newcommand{\A}{\mathpzc{a}}
\newcommand{\ax}{\mathpzc{a}}
\newcommand{\B}{\mathpzc{b}}
\newcommand{\bx}{\mathpzc{b}}
\newcommand{\bxx}{\mathpzc{b}(x)}
\newcommand{\D}{\mathpzc{d}}
\newcommand{\dx}{\mathpzc{d}}
\newcommand{\dxx}{\mathpzc{d}(x)}
\newcommand{\h}{\mathpzc{h}}
\newcommand{\hx}{\mathpzc{h}}
\newcommand{\hu}{\hat{u}_\mu}
\newcommand{\hut}{\hat{u}_{\mu_2}}
\newcommand{\hus}{\hat{u}_{\mu_6}}
\newcommand{\but}{\underline{u}_{\mu_2}}
\newcommand{\bu}{\underline{u}_\mu}
\newcommand{\ft}{(\overline{j_m\!\!}\,\,)_{t}}
\newcommand{\uu}{\underline{u}}
\newcommand{\kk}{k\left(\frac{u}{l\D}\right)}
\newcommand{\kkx}{k\left(\frac{u}{l\D(x)}\right)}
\newcommand{\G}{\overline{G}}
\newcommand{\llll}{\hat{l}}
\newcommand{\Cthree}{C_3}
\newcommand{\Cfour}{C_4}
\newcommand{\Cfive}{C_5}
\newcommand{\Csix}{C_6}
\newcommand{\Cseven}{C_7}
\newcommand{\Ceight}{C_9}
\newcommand{\Cnine}{C_8}
\newtheorem{Thm}{Theorem}[section]
\newtheorem{Lem}[Thm]{Lemma}
\newtheorem{Prop}[Thm]{Proposition}
\newtheorem{Rem}[Thm]{Remark}
\newenvironment{altproof}[1]
{\noindent {\bf Proof of {#1}}.} {\nopagebreak\mbox{}\hfill
$\Box$\par\addvspace{0.5cm}}
\newcommand{\R}{\mathbb{R}}
\newcommand{\eps}{\varepsilon}
\newcommand{\weak}{\rightharpoonup}
\begin{document}
\begin{center}
{\Large\bf Positive solutions to logistic type equations with
harvesting\footnote{2000 Mathematics Subject Classification:
35J65, 
35J25, 
92D25, 
35B05. 
\\
\indent Keywords: Logistic equation, harvesting, comparison principles, elliptic equations.}}\\
\ \\
Pedro Girão\footnote{Email: pgirao@math.ist.utl.pt. Partially
supported by the Center for Mathematical Analysis, Geometry and
Dynamical Systems through FCT Program POCTI/FEDER.} and Hossein
Tehrani\footnote{Email: tehranih@unlv.nevada.edu. This work was
initiated while the author was visiting IST Lisbon on a sabbatical
from UNLV. The support of both institutions is gratefully
acknowledged.}
\\

\vspace{2.2mm}

{\small 
Center for Mathematical Analysis, Geometry and Dynamical Systems,\\
Instituto Superior Técnico,\\ Av.\ Rovisco Pais,
1049-001 Lisbon, Portugal\\
and\\
Department of Mathematical Sciences,\\
University of Nevada,\\
Las Vegas, NV 89154-4020, USA}
\end{center}

\begin{center}
{\bf Abstract}\\
\end{center}
We use comparison principles, variational arguments and a
truncation method to obtain positive solutions to logistic type
equations with harvesting both in $\R^N$ and in a bounded domain
$\Omega\subset\R^N$, with $N\geq 3$, when the carrying capacity of
the environment is not constant. By relaxing the growth assumption
on the coefficients of the differential equation we derive a new
equation which is easily solved. The solution of this new equation
is then used to produce a positive solution of our original
problem.

\noindent

\section{Introduction}

In this paper we mainly study the existence of {\em positive}\/
solutions to the problem
\begin{equation}\label{eqab}
\left\{
\begin{array}{ll} -\Delta u=\lambda \ax  u-\bx g(u)-\mu \hx & \mbox{ in } \Omega, \\
u=0 & \mbox{ on } \partial\Omega,
\end{array}\right.
\end{equation}
when $\Omega= \R^N$, in which case the boundary condition is
understood as $\lim_{|x|\rightarrow\infty}u(x)=0\,$, as well as
when $\Omega\subset \R^N$ is a bounded smooth domain. Here $N\geq
3$, and both the functions $\ax$, $\bx$, $\hx$, and the parameters
$\lambda$, $\mu$ are nonnegative. Problem~(\ref{eqab}) can be
thought of as the steady state of the reaction-diffusion equation
$$ 
\left\{
\begin{array}{ll}
\frac{\partial u}{\partial t}=\Delta u+\lambda\A
u-\bx g(u)-\mu\hx & x\in \Omega, \\
u(x,0)=u_0(x) & x\in \Omega, \\
u(x,t)=0 & (x,t)\in\partial\Omega\times[0,\infty).
\end{array}\right.
$$ 
We interpret this as the evolution equation arising from the
population biology of
one species. 
As such the function $u$ represents the population density of the
species. Throughout we assume that
\begin{equation}\label{suplin}
\lim_{s\rightarrow 0}\frac{g(s)}{s}=0\qquad{\rm and}\qquad
\lim_{s\rightarrow\infty}\frac{g(s)}{s}=\infty,
\end{equation}
so that the nonlinearity $\lambda \ax  u-\bx g(u)$ represents a
logistic type growth. Furthermore note that both coefficients
$\ax$ and $\bx$ depend on the spatial variable, indicating
variable linear growth and competition rates in the environment.
The function $\h$ is interpreted as the harvesting distribution
and $\mu\h$ is the harvesting rate.  Hence, such equations have
been used, for example, to model fishery or hunting management
problems. We refer to \cite{OSS1} for further historical
background and references. Intuitively, one expects the survival
of the species, i.e.\ the existence of a positive solution to
(\ref{eqab}), only for small values of $\mu$.

Mathematically, the presence of the harvesting term introduces a
number of challenging issues in the study of existence of positive
solutions. Indeed the harvesting term makes the right hand side of
the equation negative at $u=0$, and therefore our problem belongs
to the class of so called semi-positone problems (see \cite{CMS}).
This prevents the direct application of the maximum principle.

The main inspiration for our study was the recent work \cite{CDT}.
There the authors consider problem~(\ref{eqab}) in $\R^N$ with the
positive
and bounded
function
$\ax \in L^{N/2}(\R^N)\cap L^\infty(\R^N)$, the natural setting
for the eigenvalue problem
$$-\Delta u=\lambda \ax u, \quad u\in {\cal D}^{1,2}(\R^N),$$
where ${\cal D}^{1,2}(\R^N)$ is the completion of $C^1_0(\R^N)$
with respect to the norm\linebreak $\left(\int|\nabla u|^2\right)^{1/2}$. In
addition, they assume that $\frac{g(u)}{u}$ is monotone, $g(u)$
behaves like $u^p, \, p>1$, at infinity and most significantly
$\bx=\ax$. These assumptions play a crucial role in the
variational approach presented in~\cite{CDT}, where, using some
delicate integral inequalities, the authors prove, for a certain
range of $\lambda$, the existence of a positive solution bounded
below by $1/|x|^{N-2}$ at infinity, for $\mu$ sufficiently small.
On the other hand, problem~(\ref{eqab}) was also considered by Du
and Ma in~\cite{DM1} and~\cite{DM2} for $g(u)=u^p$ in the absence
of the harvesting term. The existence of a positive solution was
then proved with {\it no restriction} on the growth of the
nonnegative function $\bx$.

Our first motivation for this work was to study the existence of a
positive solution in $\R^N$ in the presence of harvesting under
minimal restriction on the growth of $\bx$. The novelty of our
approach is that it not only enables us to relax the hypotheses on
the nonlinear term $g(u)$ to the more natural
conditions~(\ref{suplin}), so that it does not require the usual
monotonicity and power-like behavior, but also, more importantly,
that it allows for consideration of a broad class of functions
$\bx$. In particular we will be able to handle some functions
$\bx$ satisfying $\bx(x)\to+\infty$ as $|x|\rightarrow\infty$,
reflecting the assumption that the life conditions are less and
less favorable as one moves to infinity.

In our approach we are naturally led to consider equations of the
form
\begin{equation}\label{eq4}
-\Delta u=\lambda \ax
u\left[1-k\left(\frac{u}{\D}\right)\right]-\mu \hx,
\end{equation}
where $k$ is increasing and $\dx$ is a given function. We note
that this reduces to the classical logistic model if $k(u)=u$ and
$\D$ is a constant. Therefore in line with the classical
terminology, letting $\varsigma=\max k^{-1}(1)$, one may call
$\varsigma\D$ the carrying capacity of the environment because
without harvesting or diffusion the growth rate of the population,
$\lambda\A u\left[1-k\left(\frac{u}{\D}\right)\right]$, is
negative for $u>\varsigma\D$.

As it turns out, for suitable choices of the function $\D$
equation (\ref{eq4}) is relatively simple to solve. In fact, using
variational arguments, the maximum principle and comparison
principles, we first prove the existence of a positive solution to
(\ref{eq4}). Afterwards this solution is used to obtain a solution
of the original problem decaying at infinity not faster than $\D$.
Our method is not only simpler than that in \cite{CDT} but also
provides more general results under less restrictive hypotheses on
the coefficients.

In Section~\ref{bounded} we apply the ideas developed to deal with
the case of whole space $\R^N$ to the bounded domain case. This in
particular allows us to consider the situation where $\bx$ blows
up at the boundary of $\Omega$, which to our knowledge has not
been considered before. Indeed since the  boundary of $\Omega$ is
hostile to the population, it is natural to assume that the
carrying capacity of the environment should go to zero at
$\partial\Omega$. The blow up of $\B$ at the boundary of the
domain can then be interpreted as a consequence of the vanishing
of the carrying capacity of the environment at the boundary of the
domain. Our analysis will show that in some sense it is natural to
consider a carrying capacity for the environment that is
proportional to the distance to $\partial\Omega$.
Our results in this chapter complement and extend known results in
the bounded domain case (see \cite{OSS1}).

The organization of the paper is as follows. In Section~\ref{dois}
we state our hypotheses and make some preliminary observations. We
set up problem~(\ref{eqab}) in $\R^N$ when $\B$ does not grow
``too fast." In Section~\ref{especial} we consider
equation~(\ref{eq4}) and obtain a solution for this equation. The
existence of a {\it positive} solution for (\ref{eq4}) is then
proved in Section~\ref{psp}. In Section~\ref{cinco} we use this
solution to get a positive solution to (\ref{eqab}) when the
function $\B$ grows not faster than a certain power of the
distance to the origin. In Section~\ref{fast} we discuss the case
when the function $\B$ does not satisfy the growth requirements of
the previous section. Section~\ref{bounded} deals with the case of
a bounded domain. In Section~\ref{extensions} we generalize to the
case where the function $g$ also depends on the spatial variable.
Finally, in the Appendix we prove some auxiliary results.

Throughout we denote by ${\cal H}:={\cal D}^{1,2}(\R^N)$, $N\geq
3$, and $\|u\|=\|u\|_{{\cal D}^{1,2}(\R^N)}=\left(\int|\nabla
u|^2\right)^{1/2}$ the norm on ${\cal H}$. When the region of
integration is omitted it is understood to be $\R^N$.

\section{The setup in $\R^N$}
\label{dois}

We wish to prove the existence of a positive weak solution to the
equation
\begin{equation}\label{eq}
-\Delta u=\lambda \ax  u-\bx g(u)-\mu \hx ,\qquad u\in {\cal H}.
\end{equation}
We define a weak solution to be a function $u\in {\cal H}$
satisfying
\begin{equation}\label{int}
\int\nabla u\cdot\nabla v=\lambda\int \ax uv-\int \bx
g(u)v-\mu\int \hx v
\end{equation}
for all $v\in{\cal D}(\R^N)$. We state our assumptions.
\begin{enumerate}
\item[({\bf H}$\A$)]The function $\A:\R^N\to\R$ is positive and
belongs to $L^{N/2}(\R^N)\cap L^\infty(\R^N)$.
\end{enumerate}
We call
$$
\lambda_1=\inf_{u\in {\cal H}\setminus\{0\}}\frac{\|u\|^2}{\int\A
u^2}.
$$
\begin{enumerate}
\item[({\bf H}$g$)] The function $g:\R\to\R^+_0$ is continuous,
with $g(s)=0$ for $s\leq 0$. Furthermore, it satisfies
\begin{equation}\label{zero}
\limsup_{s\to 0} \frac{g(s)}{s^{1+\beta}}<\infty,
\end{equation}
where $\beta>0$ is a fixed constant, and
\begin{equation}\label{infinity}
\lim_{s\to+\infty}\frac{g(s)}{s}=+\infty.
\end{equation}
\item[({\bf H}$\B$)] The measurable function $\B :\R^N\to\R$ is
nonnegative, not identically equal to zero, and satisfies
\begin{equation}\label{limsup}
\bx \leq C_1 \ax {\dx}^{-\beta}
\end{equation}
for some $C_1 >0$, where $\D:\R^N\to\R$ is the Aubin-Talenti
instanton defined by
\begin{equation}\label{def_d}
\dxx=(1+|x|^2)^{-(N-2)/2}.
\end{equation}
Let $B_0=\left\{x\in\R^N: \bxx =0\right\}$. We assume either $B_0$
has measure zero, or $B_0=\overline{{\rm int}\,B_0}$ with
$\partial B_0$ Lipschitz.
\end{enumerate}
In the former case we set $\lambda_*=+\infty$ and in the latter
case
$$
\lambda_*=\inf_{u\in {\cal D}^{1,2}({\rm
int}\,B_0)\setminus\{0\}}\frac{\int_{B_0}|\nabla
u|^2}{\int_{B_0}\A u^2}.
$$
By the unique continuation principle (\cite[p.\ 519]{simon})
$\lambda_1<\lambda_*$.
\begin{enumerate}
\item[({\bf H}$\lambda$)] The value $\lambda$ is such that
$\lambda_1<\lambda<\lambda_*$. \item[({\bf H}$\h$)] The
nonnegative and not identically equal to zero function $\h$
belongs to the space $\h\in L^1(\R^N)\cap L^q(\R^N)\cap
L^s(\R^N)$, for some $q>\frac{N}{2}$ and some $s>N$, and there
exists a constant $C_2>0$ such that
\begin{equation}\label{alo}
R^{N/r}|\h|_{L^q(\R^N\setminus B_{R}(0))}\leq C_2\ \ {\rm for\
all}\ R\in\R^{+}
\end{equation}
with $\frac{1}{q}+\frac{1}{r}=1$. Here $B_{R}(0)$ denotes the ball
centered at zero with radius $R$. \item[({\bf H}$\mu$)] The
parameter $\mu$ is nonnegative.
\end{enumerate}
\begin{Rem}
Under the above hypotheses any positive weak solution $u$
of\/~{\rm (\ref{eq})} belongs to $C^{1,\alpha}_{{\rm loc}}(\R^N)$.
Furthermore, $\lim_{|x|\to\infty}u(x)=0$.
\end{Rem}
Indeed, $u$ satisfies
$$-\Delta u-\lambda \ax  u\leq 0.$$
Therefore by \cite[Theorem~8.17]{GT}, for any $x\in\R^N$, we have
$$
\sup_{B_1(x)}u\leq C|u|_{L^{2N/(N-2)}(B_2(x))}\leq C\|u\|\leq C.
$$
So $u\in L^\infty(\R^N)$, and $\lim_{|x|\to\infty}u(x)=0$. From
elliptic regularity theory \cite{GT}, it follows $u\in
C^{1,\alpha}_{{\rm loc}}(\R^N)$. We use the letter $C$ to
represent various positive constants.

The setting in which we make assumption ({\bf H}$\lambda$) is
clarified in
\begin{Prop}\label{exemplo}
Suppose $u\in{\cal H}$ is a positive weak solution to {\rm
(\ref{eq})}.
\begin{enumerate}
\item[{\rm\bf (i)}] The value $\lambda$ satisfies
$\lambda_1\leq\lambda$. This inequality is strict if $\mu>0$ or if
the restriction  of $g$ to $\R^+$ is positive.
\end{enumerate}
Suppose in addition\/ ${\rm int}\, B_0\neq\emptyset$.
\begin{enumerate}
\item[{\rm\bf (ii)}] If $\h=0$ on $B_0$, then $\lambda<\lambda_*$.
\item[{\rm\bf (iii)}] The inequality $\lambda<\lambda_*$ might not
hold if $h\not\equiv 0$ on $B_0$ and $\mu>0$.
\end{enumerate}
\end{Prop}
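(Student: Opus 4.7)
The plan is to test the weak formulation (\ref{int}) with $u$ itself, justified by density of ${\cal D}(\R^N)$ in ${\cal H}$, yielding
\[
\|u\|^2 = \lambda\int \ax u^2 - \int \bx g(u)u - \mu\int \hx u.
\]
Since $u > 0$ and $\bx$, $g(u)$, $\hx$ are all nonnegative, the last two terms are nonnegative, so $\|u\|^2 \leq \lambda\int \ax u^2$; the Rayleigh quotient characterization of $\lambda_1$ then gives $\lambda_1 \leq \lambda$. For strictness, if $\mu > 0$ then $\mu\int\hx u > 0$ because $\hx$ is nontrivially nonnegative and $u > 0$ a.e.; if $g > 0$ on $\R^+$ then $\int \bx g(u)u > 0$ by the analogous reasoning applied to $\bx$.

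\noindent\textbf{Part (ii).} Let $\phi \in {\cal D}^{1,2}({\rm int}\, B_0)$ be a positive first eigenfunction achieving the infimum $\lambda_*$, and let $\tilde\phi \in {\cal H}$ denote its extension by zero (admissible by the Lipschitz regularity of $\partial B_0$). Testing (\ref{int}) against $\tilde\phi$ and using $\bx\equiv 0$, $\hx\equiv 0$ on $B_0$ yields
\[
\int_{B_0}\nabla u\cdot\nabla\phi = \lambda\int_{B_0} \ax u \phi.
\]
Separately, $\phi$ solves $-\Delta\phi = \lambda_*\,\ax\phi$ weakly in ${\rm int}\, B_0$ with vanishing boundary trace, and Green's identity (after suitable regularization) gives
\[
\int_{B_0}\nabla u\cdot\nabla\phi = \lambda_* \int_{B_0}\ax u\phi + \int_{\partial B_0} u\,\frac{\partial\phi}{\partial\nu}\,dS,
\]
where $\nu$ is the outward unit normal. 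The Hopf boundary point lemma forces $\partial\phi/\partial\nu < 0$ on $\partial B_0$; since $u > 0$ there, the boundary integral is strictly negative. Combining the two displayed identities and dividing by $\int_{B_0}\ax u\phi > 0$ then yields $\lambda < \lambda_*$. The main technical obstacle here is the integration by parts: it requires interior elliptic regularity of $\phi$ (so that Hopf's lemma applies) together with a trace/approximation argument to account for the nonvanishing boundary values of $u$.

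\noindent\textbf{Part (iii).} The plan is to exhibit an explicit counterexample. Fix $B_0=\overline{B_1(0)}$ and choose $\ax$ equal to a small positive constant $c$ on $B_0$, extended to a positive function in $L^{N/2}(\R^N)\cap L^\infty(\R^N)$ outside, so that $\lambda_*$ is of order $c^{-1}$. The quadratic $u_0(x)=2-|x|^2$ on $B_1$ satisfies $-\Delta u_0 = 2N \geq 2\lambda c \geq \lambda\ax u_0$ whenever $\lambda \leq N/c$, so for any $\lambda \in (\lambda_*, N/c)$ (a nonempty interval for sufficiently small $c$) and since $\bx\equiv 0$ on $B_0$, the quantity $-\Delta u_0 - \lambda\ax u_0$ is nonnegative and not identically zero on $B_0$. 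Extending $u_0$ to a positive $u \in {\cal H}$ smoothly decaying at infinity and selecting $\bx$ outside $B_0$ (consistent with (\ref{limsup})) so that $-\Delta u - \lambda\ax u + \bx g(u) \geq 0$ globally, one sets $\mu\hx := -\Delta u - \lambda\ax u + \bx g(u)$, producing a positive solution of (\ref{eq}) with $\lambda > \lambda_*$ and $\hx$ meeting the integrability requirements of (\textbf{H}$\h$).
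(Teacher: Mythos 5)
Part (i) of your proposal matches the paper's argument (the paper localizes the $\B g(u)u$ term to a ball before testing with $u$, so as not to worry about its integrability over all of $\R^N$ --- a small point you gloss over), but parts (ii) and (iii) have genuine gaps. In (ii) your route through Green's identity and Hopf's lemma needs more than the hypotheses provide: ({\bf H}$\B$) only assumes $\partial B_0$ is Lipschitz (and ${\rm int}\,B_0$ may be unbounded), so the normal derivative of $\phi$ on $\partial B_0$ need not exist, Hopf's boundary point lemma (which requires an interior ball or $C^{1,\mathrm{Dini}}$ condition) is unavailable, and the boundary integral in your second identity is not defined. The paper avoids every boundary-regularity issue by decomposing $u=u_0+u^\perp$, with $u_0$ the projection onto ${\cal D}^{1,2}({\rm int}\,B_0)$, showing $u^\perp$ is harmonic and strictly positive in ${\rm int}\,B_0$ by the interior strong maximum principle, and then testing with $\phi_1^*$: the strictly positive term $\lambda\int\A u^\perp\phi_1^*$ plays the role of your boundary term, and only weak formulations are used. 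Your argument would be fine for smooth $\partial B_0$, but as written it does not prove (ii) under the stated hypotheses.

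Part (iii) fails for three reasons. First, a sign error: setting $\mu\h:=-\Delta u-\lambda\A u+\B g(u)$ makes $u$ solve $-\Delta u=\lambda\A u-\B g(u)+\mu\h$, i.e.\ the equation with the harvesting term of the wrong sign; for (\ref{eq}) you need $\mu\h=\lambda\A u-\B g(u)+\Delta u$, hence the reverse inequality $-\Delta u\leq\lambda\A u-\B g(u)$. Second, even within your convention the parameter window is empty: $\lambda_*=\lambda_1^{\mathrm{Dir}}(B_1)/c$, and $\lambda_1^{\mathrm{Dir}}(B_1)\geq N\pi^2/4>N$ (compare $B_1$ with the cube $(-1,1)^N$), so $(\lambda_*,N/c)=\emptyset$ for every $c>0$; shrinking $c$ cannot help since both endpoints scale like $1/c$. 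Third, and structurally, requiring $-\Delta u\geq\lambda\A u$ with $u>0$ on $B_0$ forces $\lambda\leq\lambda_*$ by Picone's identity (Barta's inequality): a positive supersolution on ${\rm int}\,B_0$ pushes the principal eigenvalue up, so no construction along your lines can ever produce $\lambda>\lambda_*$. The whole point of (iii) is the opposite mechanism: harvesting allows $u$ to be a strict subsolution of $-\Delta u=\lambda\A u$ on $B_0$. The paper's example takes $u=\phi_1^*+\kappa$ on $B_1$, $u=\kappa/r^{N-2}$ outside, $\lambda=\lambda_*+\mu$, $\mu\h=\mu\phi_1^*+\lambda\kappa$ on $B_1$ and $0$ outside, matched $C^1$ across $\partial B_1$, with $\A$, $\B$, $g$ chosen so the exterior equation holds exactly and (\ref{limsup}), ({\bf H}$\h$) are verified; your sketch leaves the exterior extension and the verification of these hypotheses open.
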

The proof is given in the Appendix so that we focus first on the
more important part of the paper. In the sequel we will sometimes
abbreviate weak solution to solution.

\section{A related problem}\label{especial}

   From (\ref{zero}) there exists $0<s_0\leq 1$ and $\Cfour>1$ such that
$$
\frac{g(s)}{s}\leq\lambda\frac{\Cfour}{C_1}s^\beta\quad{\rm for}\
s\leq s_0.
$$
We may assume $\Cfour\geq\frac{1}{s_0^\beta}$. We take
\begin{equation}\label{l}
l:=\left(\frac{1}{\Cfour}\right)^{1/\beta},
\end{equation}
so
\begin{equation}\label{lmenors}
l\leq s_0.
\end{equation}
Using (\ref{limsup}),
$$
\B\,\frac{g(s)}{s}\leq\lambda\A\left(\frac{s}{l\D(x)}\right)^\beta\quad{\rm
for}\ s\leq s_0.
$$
We define
\begin{equation}\label{kkkk}
k(s)=s^\beta
\end{equation}
for $s>0$, $k(s)=0$ for $s\leq 0$. We have
\begin{equation}\label{menor}
\B g(s)\leq \lambda\A s k\left(\frac{s}{l\D}\right)\qquad {\rm
for}\ s\leq s_0.
\end{equation}
We first consider the equation
\begin{equation}\label{boundb}
-\Delta u=\lambda \ax  u\left[1-\kk\right]-\mu \hx .
\end{equation}
Although we are primarily interested in the case where $k$ is as
in (\ref{kkkk}), we more generally assume
\begin{enumerate}
\item[({\bf H}$k$)] $k(s)=0$ for $s\leq 0$, $k$ is continuous,
increasing (not necessarily strictly) and $k(\varsigma)=1$ for
some $\varsigma>0$.
\end{enumerate}
In this and the next sections instead of ({\bf H}$\lambda$) we
assume
\begin{enumerate}
\item[({\bf H}$\lambda$)$^\prime$] The value $\lambda$ is such
that $\lambda>\lambda_1$.
\end{enumerate}

\begin{Thm}\label{thm0}
Under\/ {\rm ({\bf H}$\A$)}, {\rm ({\bf H}$k$)}, {\rm ({\bf
H}$\lambda$)$^\prime$} and\/ {\rm ({\bf H}$\h$)}, there exists\/
$\mu_0>0$ such that for all\/ $0\leq\mu\leq\mu_0$ equation\/ {\rm
(\ref{boundb})} has a positive weak solution $\bu\in {\cal H}\cap
C^{1,\alpha}_{{\rm loc}}(\R^N)$. Furthermore, there exists\/
$\Cthree>0$ such that for all\/ $0\leq\mu\leq\mu_0$ this weak
solution\/ $\bu$ satisfies
\begin{equation}\label{baixo}
\bu(x)\geq\frac{\Cthree}{|x|^{N-2}}\quad{\rm for\ large}\ |x|.
\end{equation}
\end{Thm}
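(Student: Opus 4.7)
I propose to apply the classical method of sub- and super-solutions in $\mathcal{H}$, combined with standard elliptic regularity. The super-solution is natural: take $\bar U:=\varsigma\D$. Because $\D$ is the Aubin-Talenti instanton, $-\Delta \D=N(N-2)\D^{(N+2)/(N-2)}>0$, and $k(\varsigma)=1$ forces the right-hand side of \eqref{boundb} at $u=\bar U$ to equal $-\mu\h\le 0$. Hence $\bar U$ is a super-solution for every $\mu\ge 0$. For the sub-solution, let $\phi_1\in\mathcal{H}$, $\phi_1>0$, be a positive eigenfunction realizing the infimum $\lambda_1$ (so $-\Delta\phi_1=\lambda_1\A\phi_1$, with strict positivity by strong maximum principle), and let $w\in\mathcal{H}$ be the unique solution of $-\Delta w=\h$ with $w(x)\to 0$ at infinity, i.e.\ the Newtonian potential of $\h$, which is well-defined since $(\text{H}\h)$ and interpolation give $\h\in L^{2N/(N+2)}(\R^N)$; clearly $w>0$. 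Set $\uu_\mu:=\epsilon\phi_1-\mu w$, with parameters to be fixed.

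The crux is to verify that $\uu_\mu$ is a positive sub-solution lying below $\bar U$. I will need two a priori estimates. First, $\phi_1\le M\D$ globally for some $M>0$: this amounts to $\phi_1(x)=O(|x|^{-(N-2)})$ at infinity, which follows from a Kelvin-transform argument applied to $\phi_1$ (the transformed function lies in $\mathcal{D}^{1,2}$ near the origin and solves an analogous equation with an $L^{N/2}$ coefficient, hence is bounded by Brezis-Kato / Moser iteration). Second, $\phi_1/w\ge c_0>0$ on $\R^N$: both functions are strictly positive and continuous; using the representations $\phi_1=\lambda_1 N(\A\phi_1)$ and $w=N(\h)$ and the asymptotics of the Riesz kernel of order $2$, the ratio has a positive limit (possibly $+\infty$) at infinity, and positivity on compact sets follows from continuity. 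A direct computation then gives
\[
-\Delta\uu_\mu=\epsilon\lambda_1\A\phi_1-\mu\h,
\]
and the sub-solution inequality for \eqref{boundb} reduces, after dividing by $\A>0$, to
\[
\epsilon(\lambda-\lambda_1)\phi_1\;\ge\;\lambda\,\uu_\mu\, k(\uu_\mu/\D)\,+\,\lambda\mu\, w.
\]
The estimate $\uu_\mu\le\epsilon\phi_1\le\epsilon M\D$ combined with the continuity of $k$ at $0$ gives $\lambda\uu_\mu k(\uu_\mu/\D)\le\tfrac12(\lambda-\lambda_1)\epsilon\phi_1$ for $\epsilon$ small, while $\phi_1\ge c_0 w$ shows that the remaining inequality holds as soon as $\mu\le\mu_0:=\epsilon c_0(\lambda-\lambda_1)/(2\lambda)$. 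The same smallness makes $\uu_\mu\ge\tfrac12\epsilon\phi_1>0$ and, after choosing $\epsilon\le\varsigma/M$, $\uu_\mu\le\bar U$.

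With $\uu_\mu$ and $\bar U$ in hand, I implement monotone iteration. Pick $C>0$ large enough that $u\mapsto\lambda\A u[1-k(u/\D)]+Cu$ is non-decreasing on $[0,\varsigma\D]$, and define recursively $(-\Delta+C)\,v_{n+1}=\lambda\A v_n[1-k(v_n/\D)]+Cv_n-\mu\h$, with $v_0=\uu_\mu$. The operator $(-\Delta+C)^{-1}$ on $\R^N$ has a positive Bessel kernel, so the standard maximum-principle argument yields $\uu_\mu\le v_n\le v_{n+1}\le\bar U$. The monotone pointwise limit $\bu$ is bounded in $\mathcal{H}$ (testing against $v_n$), and passage to the limit via dominated convergence produces a weak solution in $\mathcal{H}$; Brezis-Kato/Moser iteration followed by Calder\'on-Zygmund and Schauder estimates upgrade this to $\bu\in C^{1,\alpha}_{\rm loc}$. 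Positivity is automatic from $\bu\ge\uu_\mu>0$.

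The lower bound \eqref{baixo} is extracted from the sub-solution: $\bu\ge\uu_\mu=\epsilon\phi_1-\mu w$. Since $\h\in L^1$, $w(x)\sim c_N\|\h\|_{L^1}|x|^{-(N-2)}$ at infinity, and a matching lower bound $\phi_1(x)\ge C_\phi|x|^{-(N-2)}$ for $|x|$ large follows from $\phi_1=\lambda_1 N(\A\phi_1)$ by restricting the Riesz-potential integral to a bounded region where $\A\phi_1$ has positive mass. Thus $\uu_\mu(x)\ge(\epsilon C_\phi-\mu C_w)|x|^{-(N-2)}$ for large $|x|$, and after possibly shrinking $\mu_0$ so that $\epsilon C_\phi>2\mu_0 C_w$, \eqref{baixo} holds with $\Cthree:=\epsilon C_\phi/2$, uniformly in $\mu\in[0,\mu_0]$. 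The principal obstacle throughout is the global pointwise control of $\phi_1$ (boundedness of $\phi_1/\D$ and the lower bound $\phi_1\ge c_0 w$) under the mild hypothesis $\A\in L^{N/2}\cap L^\infty$ with no quantitative decay of $\A$; the Kelvin transform and Riesz-potential asymptotics are the key tools for handling this.
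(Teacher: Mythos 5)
Your plan stands or falls on the a priori estimate $\phi_1\leq M\D$, i.e.\ $\phi_1(x)=O(|x|^{-(N-2)})$ at infinity, and this step is a genuine gap: it is not proved by your sketch and it is in fact false under ({\bf H}$\A$) alone. After the Kelvin transform the potential is only in $L^{N/2}$ near the origin (the $L^{N/2}$ norm is conformally invariant, and boundedness of $\A$ gives nothing better there), and Brezis--Kato with an $L^{N/2}$ potential yields $u\in L^q_{\rm loc}$ for every finite $q$ but not $L^\infty$; Moser iteration needs the potential in $L^p$ with $p>N/2$. Moreover the conclusion itself can fail: take $\A$ positive, bounded, equal to $c\,|x|^{-2}(\log|x|)^{-1}$ for large $|x|$; then $\A\in L^{N/2}(\R^N)\cap L^\infty(\R^N)$ (since $\int^\infty r^{-1}(\log r)^{-N/2}dr<\infty$), yet a positive ${\cal D}^{1,2}$ solution of $-\Delta\phi=\lambda_1\A\phi$ behaves at infinity like $|x|^{-(N-2)}(\log|x|)^{\gamma}$ with $\gamma=\lambda_1 c/(N-2)>0$ (the solution tending to a constant is excluded by $\phi_1\in L^{2N/(N-2)}$), so $\phi_1/\D$ is unbounded. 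Without $\phi_1\lesssim\D$ you can guarantee neither the ordering $\epsilon\phi_1-\mu w\leq\varsigma\D$ nor the uniform smallness of $k\bigl(\underline{u}_\mu/(l\D)\bigr)$ on which your subsolution inequality rests (in the example the ratio $\underline{u}_\mu/(l\D)$ tends to infinity), so the sub/supersolution pair is never available. This is exactly the difficulty the paper's proof avoids: it never uses the eigenfunction as a lower barrier, but minimizes $I_\mu$ over the order-closed set $\{u\leq\llll\D\}$, shows the minimizer solves the truncated equation (\ref{special}) and is somewhere positive for small $\mu$, then takes the minimizer at $\mu=\mu_2$ as the subsolution and obtains positivity and (\ref{baixo}) by comparing $\bu+\mu w$ with $\eps\,|x-x_0|^{-(N-2)}$ outside a small ball, together with the Allegretto--Odiobala bound $w\leq C|x|^{-(N-2)}$ from (\ref{alo}).

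There are secondary problems as well, fixable but not free: attainment of $\lambda_1$ by some $\phi_1$ is asserted, not proved (it does hold under ({\bf H}$\A$), via weak continuity of $u\mapsto\int\A u^2$, but the paper never needs it); your monotone iteration requires a constant $C$ making $s\mapsto\lambda\A s[1-k(s/(l\D))]+Cs$ nondecreasing, which amounts to a one-sided Lipschitz bound on $s\mapsto sk(s/(l\D))$ that ({\bf H}$k$) (continuity and monotonicity only) does not supply -- the paper instead minimizes $I_\mu$ over the order interval, which needs no Lipschitz property; the asymptotic $w(x)\sim c_N\|\h\|_{L^1}|x|^{-(N-2)}$ is not automatic for $\h\in L^1$ and is anyway more than you need (only the upper bound, i.e.\ Lemma~\ref{AO} under (\ref{alo}), is required); and at $u=\varsigma\D$ the bracket in (\ref{boundb}) is $1-k(\varsigma/l)\leq 0$, not $0$, since the equation scales by $l\D$ -- harmless, but a sign you were solving a slightly different equation.
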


In this section we prove existence of a solution to
(\ref{special}) below. This solution will be used in the next
section to establish Theorem~\ref{thm0}. We define $\llll$ by
\begin{equation}\label{llll}
\llll=\varsigma l.
\end{equation}
\begin{Rem}\label{remark}
The function $\llll\D$ is a supersolution of\/ {\rm
(\ref{boundb})}.
\end{Rem}
Indeed, this follows from $-\Delta \D=N(N-2)\D^{2^*-1}>0$, where
$2^*=2N/(N-2)$.
Consider $\G:\R^N\times\R\to\R$ with
$\G(x,u):=\lambda\A(x)\int_0^usk\left(\frac{s}{l\D(x)}\right)\,ds$
and the functional $I_\mu:{\cal H}\to\R\cup\{+\infty\}$ defined by
\begin{equation}\label{I}
I_\mu(u)= \frac{1}{2}\|u\|^2-\frac{\lambda}{2}\int \ax (u^+)^2+
\int \G(\,\cdot\,,u)+\mu\int \hx  u
\end{equation}
if $\int \G(\,\cdot\,,u)<\infty$, and $I_\mu(u)=+\infty$
otherwise. We have used the standard notation $u^+=\max\{0,u\}$.
The function $\D$ belongs to ${\cal H}$. The function $\h$ belongs
to the space $L^{2N/(N+2)}(\R^N)$ because $1<2N/(N+2)<N/2<q$. So
we have $I_\mu(\llll\D)<\infty$ since $\int
\G(\,\cdot\,,\llll\D)<\infty$. Indeed, $k$ increasing in $\R^+$
implies
\begin{equation}\label{boundg}
\G(x,u)\leq \lambda\A(x)u^2\kkx.
\end{equation}
Hence,
\begin{eqnarray*}
\int G(\,\cdot\,,\llll\D)
&\leq&\lambda \llll^2\int \A  \D^2\\
&<& C\|\A \|_{L^{N/2}(\R^N)}\|\D \|^2_{L^{2N/(N-2)}(\R^N)}\\
&\leq&C\left(\int_0^\infty\frac{1}{(1+r^2)^{(N+1)/2}}\,dr\right)^{(N-2)/N}\
\,<\ \,\infty.
\end{eqnarray*}
We define the set
\begin{equation}\label{N}
N=\left\{u\in {\cal H}:u\leq \llll\D \ \,{\rm a.e.\ in}\
\R^N\right\}.
\end{equation}
The set $N$ is weakly closed.

\begin{Lem}\label{coercive} Let $L\geq 0$.
The functional $I_\mu$ is coercive on $N$, uniformly in $\mu$ with
$0\leq\mu\leq L$, i.e.\ for each $C>0$, there exists $R>0$ such
that for all $0\leq\mu\leq L$ and $u\in N$, if $\|u\|>R$ then
$I_\mu(u)>C$.
\end{Lem}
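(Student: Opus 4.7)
The plan is to exploit the pointwise constraint $u\leq\llll\D$ that defines $N$ in order to tame the one potentially destabilizing term in $I_\mu$. At first glance coercivity is not obvious: the term $-\frac{\lambda}{2}\int \A (u^+)^2$ is quadratic and could, a priori, swamp the Dirichlet norm $\frac{1}{2}\|u\|^2$; the remaining pieces of $I_\mu$ are either nonnegative or at most linear in $\|u\|$. The main (and essentially only nontrivial) step is to show that membership in $N$ forces this negative quadratic term to be bounded by a constant that depends neither on $u$ nor on $\mu$.

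The execution goes as follows. First I would note that for $u\in N$, since $\llll\D\geq 0$, the inequality $u\leq\llll\D$ a.e.\ implies $u^+\leq\llll\D$ a.e., and hence $(u^+)^2\leq\llll^2\D^2$. This gives
$$\int \A (u^+)^2\leq\llll^2\int \A \D^2,$$
and the right-hand side is finite, and independent of $u$ and $\mu$, by exactly the Hölder--Sobolev computation already carried out in the excerpt to show $\int \G(\,\cdot\,,\llll\D)<\infty$. Next, since $k\geq 0$ the integrand defining $\G$ is pointwise nonnegative, so $\int \G(\,\cdot\,,u)\geq 0$. Finally, since $\h\in L^{2N/(N+2)}(\R^N)$ (as already observed in the excerpt, using $1<\frac{2N}{N+2}<\frac{N}{2}<q$), Hölder's inequality together with the Sobolev embedding ${\cal H}\hookrightarrow L^{2N/(N-2)}(\R^N)$ yields
$$\mu\left|\int \h u\right|\leq L\,|\h|_{L^{2N/(N+2)}}\,|u|_{L^{2N/(N-2)}}\leq L\,C_S\,|\h|_{L^{2N/(N+2)}}\,\|u\|$$
for every $\mu\in[0,L]$, where $C_S$ denotes the Sobolev constant.

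Combining these three estimates,
$$I_\mu(u)\geq\tfrac{1}{2}\|u\|^2-L\,C_S\,|\h|_{L^{2N/(N+2)}}\,\|u\|-\tfrac{\lambda\llll^2}{2}\int \A \D^2$$
for all $u\in N$ and all $\mu\in[0,L]$. The right-hand side depends only on $\|u\|$ and on constants that are independent of both $u$ and $\mu$, and it tends to $+\infty$ as $\|u\|\to\infty$; this is exactly the uniform coercivity statement. The only subtle point of the argument is recognizing that the apparently quadratic negative contribution $-\frac{\lambda}{2}\int \A (u^+)^2$ is in fact uniformly bounded on $N$, thanks to the envelope $\llll\D\in {\cal H}$.
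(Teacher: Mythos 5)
Your proof is correct, but it takes a genuinely different and more elementary route than the paper. You observe that the constraint $u\leq\llll\D$ (with $\llll\D\geq 0$) forces $(u^+)^2\leq\llll^2\D^2$ pointwise, so the dangerous term $-\frac{\lambda}{2}\int\A(u^+)^2$ is bounded below by the fixed finite constant $-\frac{\lambda\llll^2}{2}\int\A\D^2$ (finite by the Hölder--Sobolev estimate already used for $\int\G(\,\cdot\,,\llll\D)$); together with $\G\geq 0$ and the linear bound on the harvesting term this gives the explicit lower bound $I_\mu(u)\geq\frac12\|u\|^2-C_1\|u\|-C_0$, uniform in $\mu\in[0,L]$, from which coercivity is immediate and quantitative (one can read off $R$ for each $C$). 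The paper instead argues by contradiction: it takes $u_n\in N$ with $\|u_n\|\to\infty$ and $I_{\mu_n}(u_n)\leq C$, normalizes $v_n=u_n/\|u_n\|$, notes that the envelope forces the weak limit to satisfy $v^+\equiv 0$, concludes $\int\A(v_n^+)^2=o(1)$, and derives $I_{\mu_n}(u_n)\geq\|u_n\|^2\bigl(\frac12+o(1)-C\|u_n\|^{-1}\bigr)\to\infty$. Your direct estimate avoids the weak-convergence step (which, as stated in the paper, implicitly uses that $v\mapsto\int\A v^2$ passes to the weak limit, a consequence of $\A\in L^{N/2}\cap L^\infty$), and is arguably cleaner for this lemma; the paper's contradiction-and-normalization scheme has the advantage of being the template that is reused later, in Lemma~\ref{coercivity}, where the admissible set has no upper envelope and a direct pointwise bound of this kind is unavailable.
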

\begin{proof}
Suppose by contradiction there exists $u_n\in N$ with
$\|u_n\|\to\infty$, and $\mu_n\in[0,L]$ such that
$I_{\mu_n}(u_n)\leq C$. The sequence $v_n:=u_n/\|u_n\|$ is bounded
in ${\cal H}$ and so we may assume $v_n\weak v$ in ${\cal H}$,
$v_n\to v$ a.e.\ in $\R^N$. Since $u_n\leq \llll\D $ we have
$v^+\equiv 0$. Thus $\int \ax (v_n^+)^2=o(1)$. Clearly,
$$
I_{\mu_n}(u_n)\geq\|u_n\|^2\left(\frac{1}{2}+o(1)
-C\frac{|\h|_{L^{2N/(N+2)}(\R^N)}}{\|u_n\|}\right)\longrightarrow
\infty.
$$
This contradiction proves the lemma.
\end{proof}
Since the functional $I_\mu$ is weakly lower semi-continuous on
${\cal H}$, it admits a minimizer $\hu$ on $N$ for each $\mu\geq
0$. We note the derivative $I'_\mu(\hu)\varphi$ is well defined
for any $\varphi\in{\cal H}\cap L^\infty(\R^N)$ with compact
support because $\sup \hu$ is uniformly bounded (by $\llll\D$). In
Lemma~\ref{differentiability} we prove the differentiability of a
related functional in a more general situation when we do not know
a priori $\sup \hu$ is uniformly bounded.
\begin{Lem}\label{m}
The function $\hu$ is a solution to the equation
\begin{equation}\label{special}
-\Delta u=\lambda\A u^+-\lambda\A
uk\left(\frac{u}{l\D}\right)-\mu\h.
\end{equation}
\end{Lem}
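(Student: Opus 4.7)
The plan is to derive the Euler--Lagrange equation from the fact that $\hat{u}_\mu$ minimises $I_\mu$ on the convex set $N$. Because the constraint $u \leq \llll\D$ is one-sided, arbitrary directional perturbations need not stay in $N$, so minimality will yield only a variational inequality; I would then invoke the supersolution property of $\llll\D$ recorded in Remark~\ref{remark} to upgrade this inequality to an equation. The easy half is direct: for $\varphi \in \mathcal{D}(\R^N)$ with $\varphi\geq 0$ and $t>0$ small, the perturbation $\hat{u}_\mu - t\varphi \leq \hat{u}_\mu \leq \llll\D$ lies in $N$, so $I_\mu(\hat{u}_\mu - t\varphi) \geq I_\mu(\hat{u}_\mu)$, and dividing by $t$ and letting $t\to 0^+$ gives $I'_\mu(\hat{u}_\mu)\varphi \leq 0$ for every such $\varphi$.

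For the opposite inequality I would use the standard obstacle-problem truncation. For $\varphi\in\mathcal{D}(\R^N)$ with $\varphi\geq 0$, set
$$v_t = \min\{\hat{u}_\mu + t\varphi,\,\llll\D\} = \hat{u}_\mu + t\varphi - \eta_t, \qquad \eta_t := (\hat{u}_\mu + t\varphi - \llll\D)^+;$$
then $v_t\in N$, and $\eta_t\geq 0$ belongs to $\mathcal{H}$ with compact support contained in $\operatorname{supp}\varphi$ (since $\hat{u}_\mu \leq \llll\D$ everywhere). Minimality combined with a first-order expansion of $I_\mu$ at $\hat{u}_\mu$ (legitimate because the perturbation is bounded) gives
$$t\,I'_\mu(\hat{u}_\mu)\varphi \;\geq\; I'_\mu(\hat{u}_\mu)\eta_t + o(t),$$
so the crux is to prove $I'_\mu(\hat{u}_\mu)\eta_t = o(t)$. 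By Remark~\ref{remark}, $\llll\D$ is a supersolution of~\eqref{boundb}; testing that inequality against $\eta_t\geq 0$ and using $k(\llll/l)=k(\varsigma)=1$ to annihilate the reaction terms yields $\int\nabla(\llll\D)\cdot\nabla\eta_t + \mu\int\h\,\eta_t \geq 0$. Subtracting this from $I'_\mu(\hat{u}_\mu)\eta_t$ and estimating the residue---using $\eta_t \leq t\varphi$ on its support, the Stampacchia identity $\nabla(\hat{u}_\mu - \llll\D)=0$ a.e.\ on the contact set $\{\hat{u}_\mu = \llll\D\}$, and the local boundedness of $\hat{u}_\mu$, $\A$ and $\h$ on $\operatorname{supp}\varphi$---produces the $o(t)$ bound. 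Dividing by $t$ and passing to the limit gives $I'_\mu(\hat{u}_\mu)\varphi \geq 0$, hence equality for every $\varphi\geq 0$. Splitting a general test function as $\varphi = \varphi^+ - \varphi^-$ then yields the weak formulation of~\eqref{special}.

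The main obstacle in this proof is precisely the $o(t)$ estimate for the truncation term $I'_\mu(\hat{u}_\mu)\eta_t$. Both its gradient piece and its zeroth-order pieces have to be controlled on the delicate contact set where $\hat{u}_\mu$ touches the obstacle $\llll\D$, and the supersolution property of $\llll\D$ is what supplies the cancellation---crucially, the identity $k(\varsigma)=1$ makes the reaction residual vanish \emph{identically} on the obstacle rather than being merely small, which is what allows the argument to close.
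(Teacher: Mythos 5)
Your proposal is correct and is essentially the paper's own proof, which simply refers to the standard sub/supersolution truncation argument of \cite[subsection I.2.3]{S}: one-sided minimality on $N$ gives a variational inequality, and the truncation $\min\{\hat{u}_\mu+t\varphi,\llll\D\}$ together with the supersolution property of $\llll\D$ (where $k(\varsigma)=1$ makes the reaction term vanish on the obstacle) upgrades it to the equation (\ref{special}). One minor imprecision: you only need, and your cancellation argument only delivers, the one-sided bound $I'_\mu(\hat{u}_\mu)\eta_t\geq o(t)$ --- the contact-set contribution $\int_{\{\eta_t>0\}}|\nabla(\hat{u}_\mu-\llll\D)|^2$ is nonnegative but not obviously $o(t)$ --- rather than the two-sided estimate $I'_\mu(\hat{u}_\mu)\eta_t=o(t)$ you state, but this changes nothing in the conclusion.
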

The argument of the proof is identical to the one in
\cite[subsection I.2.3]{S}.

\begin{Lem}\label{negative}
There exist $\mu_1, \Cfive>0$ such that for $0\leq\mu\leq\mu_1$,
we have $\inf_{N} I_\mu\leq -\Cfive<0$.
\end{Lem}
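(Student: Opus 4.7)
The plan is to exhibit an explicit element of $N$ on which $I_0$ is already strictly negative, and then observe that the correction $\mu\int \hx u$ is linear in $\mu$, so the negativity persists for small $\mu$.

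First, since $\lambda>\lambda_1$, there exists $v\in{\cal H}\setminus\{0\}$ with $\|v\|^2<\lambda\int\A v^2$. By density of $C_c^\infty(\R^N)$ in ${\cal H}$ (both in the ${\cal H}$-norm and in the quadratic form $u\mapsto\int \A u^2$, the latter via $\A\in L^{N/2}$ and Sobolev), and by replacing the approximant by its absolute value, I may assume that $w\in{\cal H}$ is nonnegative, bounded, and compactly supported, with
$$\delta:=\tfrac{1}{2}\Bigl(\lambda\int \A w^2-\|w\|^2\Bigr)>0.$$
Setting $K:={\rm supp}\,w$ and $m:=\inf_K \D>0$ (as $\D$ is continuous and strictly positive), the pointwise bound $tw\leq\llll\D$ holds as soon as $t\,\|w\|_\infty\leq\llll m$, so $tw\in N$ for all sufficiently small $t>0$.

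For such $t$, the quadratic part of $I_0(tw)$ equals $-\delta t^2$. From (\ref{boundg}) and the fact that on $K$ the argument $tw/(l\D)\leq t\,\|w\|_\infty/(lm)$ tends uniformly to $0$ as $t\to 0$, the continuity of $k$ with $k(0)=0$ forces $\int\G(\,\cdot\,,tw)=o(t^2)$. Hence for $t_0>0$ small enough, $I_0(t_0 w)\leq -2\Cfive$ for some $\Cfive>0$. Finally, since $w$ is bounded and compactly supported and $\h\in L^1(\R^N)$, $M:=t_0\int \hx w$ is finite; choosing $\mu_1:=\Cfive/M$ gives
$$I_\mu(t_0 w)=I_0(t_0 w)+\mu\,t_0\int \hx w\leq -2\Cfive+\Cfive=-\Cfive$$
for every $\mu\in[0,\mu_1]$, so $\inf_N I_\mu\leq -\Cfive$.

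The only delicate step is the $o(t^2)$ estimate for $\int \G(\,\cdot\,,tw)$: it hinges on the compactness of ${\rm supp}\,w$, which keeps the argument of $k$ uniformly small and thereby allows invoking only the continuity of $k$ at $0$ — essential since (\textbf{H}$k$) assumes nothing quantitative about $k$ near the origin.
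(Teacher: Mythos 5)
Your argument is correct and is essentially the paper's own proof: both select a nonnegative, compactly supported function whose Rayleigh quotient beats $\lambda$ (the paper takes it from a minimizing sequence for $\lambda_1$ in ${\cal D}(\R^N)$, you get it by density plus absolute value), scale by a small $t$ so that the function lies in $N$ and, using (\ref{boundg}), $k(0)=0$ with $k$ continuous, and $\D^{-1}\in L^\infty$ on the compact support, show the $\G$-term is $o(t^2)$, and then absorb the harvesting term, which is linear in $\mu$. The only cosmetic point is the degenerate case $\int\hx w=0$, where $\mu_1:=\Cfive/M$ is undefined but any $\mu_1>0$ trivially works.
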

\begin{proof} From the definition of $\lambda_1$, there exists a sequence
$u_n\in{\cal D}(\R^N)\setminus\{0\}$ satisfying
$$
\frac{\|u_n\|^2}{\int\A u_n^2}\to\lambda_1.
$$
Since
$$
\min\left\{\frac{\|u_n^+\|^2}{\int\A
(u_n^+)^2},\frac{\|u_n^-\|^2}{\int\A (u_n^-)^2}\right\}\leq
\frac{\|u_n\|^2}{\int\A u_n^2}
$$
if $u_n$ changes sign, we may assume each function $u_n$ is
nonnegative. Fix an $n$ large enough so
$$
\frac{\|u_n\|^2}{\int\A u_n^2}<\lambda
$$
and let $K$ be the support of $u_n$. For small $t\in\R^+$, the
energy of $tu_n$ is
\begin{eqnarray*}
I_\mu(tu_n)&=&{\textstyle\frac{t^2}{2}}\|u_n\|^2
-{\textstyle\frac{\lambda t^2}{2}}\int_{K} \ax u_n^2
+\int_{K}  G(\,\cdot\,,tu_n)+\mu t\int_{K} \hx u_n\\
&\leq&{\textstyle\frac{t^2}{2}\|u_n\|^2
\left(1-\lambda\frac{\int_K\A u_n^2}{\|u_n\|^2}\right)} +t^{2}o(1)
+\mu t\int_{K} \hx u_n.
\end{eqnarray*}
Here $o(1)\to 0$ as $t\to 0$. We have used (\ref{boundg}), $k$ is
continuous at zero with $k(0)=0$ and $u_n\in{\cal D}(\R^N)$. Note
$\D^{-1}\in L^\infty(K)$. We fix $t$ small enough so $tu_n\in N$
and the sum of the first two terms is negative, say equal to $-C$,
with $C>0$. For $\mu$ sufficiently small, $0\leq\mu\leq\mu_1$, the
last term can be made smaller than $-C/2$. This shows $\inf_N
I_\mu\leq-C/2=:-\Cfive$.
\end{proof}

As in \cite[Proposition~1.4]{CDT}, there exist $0<r_0<R_0$ such
that
\begin{equation}\label{between}
0\leq\mu\leq\mu_1\ \  \Rightarrow\ \ r_0\leq\|\hu\|\leq R_0.
\end{equation}
Indeed, the inequality
$$
I_\mu(u)\geq -C\|u\|^2+\int G(\,\cdot\,,u)-C\|u\|\geq
-C\|u\|^2-C\|u\|
$$
implies
$$
\liminf_{u\to 0}I_\mu(u)\geq 0.
$$
Thus (\ref{between}) follows from Lemmas~\ref{coercive}
and~\ref{negative}.

\section{A positive solution for the related problem}\label{psp}
In this section we use the minimizers $\hu$ of $I_\mu$ on $N$
obtained above, Lemmas~\ref{coercive} and \ref{negative}, and (\ref{between}) to complete the\\
\begin{altproof}{Theorem~\ref{thm0}}
By the Riesz Representation Theorem there exists $w\in{\cal H}$
satisfying
\begin{equation}\label{h}
\int \nabla w\cdot\nabla \phi=\int \hx\phi
\end{equation}
for all $\phi\in{\cal H}$, as $\h\in L^{2N/(N+2)}$. Since also
$\h\in L^s$ for some $s>N$, by elliptic regularity theory $w$
belongs to the space $C^{1,\alpha}_{{\rm loc}}(\R^N)$ for some
$\alpha>0$. We can rewrite (\ref{special}) as
$$
-\Delta(\hu+\mu w)=\lambda \A  \hu^+\left[1-k\left(\frac{\hu}{l\D
}\right)\right].
$$
The right hand side satisfies $0\leq \lambda \A
\hu^+\left[1-k\left(\frac{\hu}{l\D }\right)\right]\leq \lambda \A
\hu^+$, since $\hu\leq \llll\D $ and $k$ is increasing in $\R^+$.
As $\hu^+\in L^\infty(\R^N)$ and $\A \in L^\infty(\R^N)$, by
elliptic regularity theory $\hu\in C^{1,\alpha}_{{\rm
loc}}(\R^N)$.

There exists $0<\mu_2\leq\mu_1$ such that for all
$0\leq\mu\leq\mu_2$ one can choose $x_0(\mu)$ where
$\hu(x_0(\mu))>0$. Otherwise $\hu\leq 0$ and
\begin{eqnarray*}
I_\mu(\hu)&=&\frac{1}{2}\|\hu\|^2+\mu\int \hx \hu\\
&\geq&\frac{1}{2}\|\hu\|^2-\mu|\h|_{L^{2N/(N+2)}}C\|\hu\|\ \,
\geq\ \, 0
\end{eqnarray*}
for small $\mu$ because $r_0\leq\|\hu\|\leq R_0$ (see
(\ref{between})). This contradicts Lemma~\ref{negative}.

Because the function $\hut$ is a solution of (\ref{special}) for
$\mu=\mu_2$, the function $\hut$ is a subsolution of
(\ref{special}) for $0\leq\mu\leq\mu_2$. Using
Lemma~\ref{coercive}, we minimize the functional $I_\mu$ over the
set
\begin{equation}\label{numero}
M=\left\{u\in{\cal H}:\hut\leq u\leq \llll\D \ {\rm \, a.e.\ in}\
\R^N\right\}.
\end{equation}
Thus, for $0\leq\mu\leq\mu_2$, obtain new solutions $\bu$ of
(\ref{special}), which means
\begin{equation}\label{smooth}
\int\nabla \bu\cdot\nabla v=\lambda\int \ax \bu^+ v-\lambda\int
\A\bu k\left(\frac{\bu}{l\D}\right)v-\mu\int \hx v,
\end{equation}
for all $v\in{\cal D}(\R^N)$.

For later reference, we note that using Lemma~\ref{negative},
inequality (\ref{between}) and observing that
$$I_\mu(\bu)\leq I_{\mu_2}(\hut)+C|\mu - \mu_2|R_0,$$
we may assume, by decreasing $\mu_2$ if necessary, that
\begin{equation}\label{negativity}
I_\mu(\bu)=\inf_{M} I_\mu\leq -\frac{\Cfive}{2}<0,\quad\quad 0\leq
\mu \leq \mu_2.
\end{equation}
Here the constant $C_5$ is as in Lemma~\ref{negative}.

 We fix $x_0=x_0(\mu_2)$. There exists $\rho>0$ such that
$$
\inf_{\overline{B_\rho(x_0)}}\hut>0.
$$
Choose $\eps$ sufficiently small satisfying
$$
\frac{\eps}{|x-x_0|^{N-2}}<\hut(x)=\but(x)\quad{\rm if}\
x\in\partial B_\rho(x_0).
$$
All the $\bu$ lie above $\but$ and $w$ is positive so
\begin{equation}\label{positive}
\inf_{\overline{B_\rho(x_0)\!\!}}\,\bu\geq\inf_{\overline{B_\rho(x_0)\!\!}}\,\but>0
\end{equation}
and
$$
\frac{\eps}{|x-x_0|^{N-2}}<(\bu+\mu w)(x)\quad{\rm if}\
x\in\partial B_\rho(x_0),
$$
for all $0\leq\mu\leq\mu_2$. Let
$$
S_\mu=\left\{x\in
B_\rho(x_0)^C:\frac{\eps}{|x-x_0|^{N-2}}>(\bu+\mu w)(x)\right\}.
$$
Note $ 0\leq \lambda \A  \bu k\left(\frac{\bu}{l\D}\right)\leq
\lambda \A  \bu^+. $ Let $v$ be an arbitrary function in ${\cal
H}$ and $v_n\in{\cal D}(\R^N)$, $v_n\to v$ in ${\cal H}$. Using
equality (\ref{smooth}) with $v$ replaced by $v_n$ and passing to
the limit, we see (\ref{smooth}) is valid for $v$ in ${\cal H}$.
Hence, using (\ref{h}),
\begin{equation}\label{fmu}
\int\nabla(\bu+\mu w)\cdot\nabla\phi=\int \lambda \A
\hu^+\left[1-k\left(\frac{\hu}{l\D}\right)\right] \phi\quad{\rm
for\ all}\ \phi\in {\cal H},
\end{equation}
Also
\begin{equation}\label{fund}
\int\nabla\left(\frac{1}{|x-x_0|^{N-2}}\right)\cdot\nabla\phi=0
\end{equation}
for all $\phi\in {\cal H}$ satisfying $\phi(x)=0$ for $x\in
B_\rho(x_0)$. Subtracting (\ref{fund}) from (\ref{fmu}),
$$
\int\nabla\left(\bu+\mu
w-\frac{\eps}{|x-x_0|^{N-2}}\right)\cdot\nabla\phi= \int \lambda
\A  \hu^+\left[1-k\left(\frac{\hu}{l\D}\right)\right] \phi
$$
for all $\phi\in {\cal H}$ satisfying $\phi(x)=0\ {\rm for}\ x\in
B_\rho(x_0)$. The function $\phi:= \left(\bu+\mu
w-\frac{\eps}{|x-x_0|^{N-2}}\right) \chi_{S_\mu}$ belongs to
${\cal H}$, is less than or equal to zero and has support in
$B_\rho(x_0)^C$. Thus
$$
\int_{S_\mu}\left|\nabla\left(\bu+\mu
w-\frac{\eps}{|x-x_0|^{N-2}}\right)\right|^2\leq 0.
$$
Therefore $S_\mu$ is empty which means
\begin{equation}\label{below}
\frac{\eps}{|x-x_0|^{N-2}}\leq (\bu+\mu w)(x)\quad{\rm for\ all}\
x\in B_\rho(x_0)^C.
\end{equation}
We now recall the following lemma due to Allegretto and Odiobala.
\begin{Lem}{\rm \cite[Lemma 4]{AO}}\label{AO}
Let $\h\in L^1(\R^N)$ and suppose\/ {\rm (\ref{alo})} holds. Then
there exists a constant $C$ such that
$$
w(x)\leq \frac{C}{|x|^{N-2}}\quad{\rm for\ all}\
x\in\R^N\setminus\{0\}.
$$
\end{Lem}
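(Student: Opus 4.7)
The function $w$ defined by $\int\nabla w\cdot\nabla\phi=\int\h\phi$ can, since $\h\in L^1(\R^N)\cap L^q(\R^N)$ with $q>N/2$, be represented as the Newtonian potential
\begin{equation*}
w(x)=c_N\int_{\R^N}\frac{\h(y)}{|x-y|^{N-2}}\,dy,
\end{equation*}
and it suffices to prove the pointwise bound for this convolution. For $|x|\leq 1$ the claimed inequality reduces to the boundedness of $w$, which follows from standard elliptic regularity for $-\Delta w=\h$ with $\h\in L^q$, $q>N/2$. So the core task is the estimate for $|x|\geq 1$.

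Fix such an $x$ and split $\R^N=A_1\cup A_2\cup A_3$ with $A_1=B_{|x|/2}(0)$, $A_2=B_{|x|/2}(x)$, and $A_3=\R^N\setminus(A_1\cup A_2)$. On both $A_1$ and $A_3$ one has $|x-y|\geq |x|/2$, so the kernel is bounded above by $C|x|^{-(N-2)}$ and the contribution of these two regions to $w(x)$ is at most $C|x|^{-(N-2)}|\h|_{L^1(\R^N)}$.

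The decisive estimate is over $A_2$, where the singularity of the kernel is felt. Here I apply H\"older's inequality with the exponents $q,r$ satisfying $1/q+1/r=1$:
\begin{equation*}
\int_{A_2}\frac{\h(y)}{|x-y|^{N-2}}\,dy\;\leq\;|\h|_{L^q(A_2)}\left(\int_{A_2}\frac{dy}{|x-y|^{(N-2)r}}\right)^{\!1/r}.
\end{equation*}
Since $q>N/2$ forces $(N-2)r<N$, a change of variables makes the second factor equal to $C(|x|/2)^{N/r-(N-2)}$. Because $A_2\subset\R^N\setminus B_{|x|/2}(0)$, the decay hypothesis (\ref{alo}) gives $|\h|_{L^q(A_2)}\leq C_2(|x|/2)^{-N/r}$, and the two powers of $|x|$ cancel to leave precisely $C|x|^{-(N-2)}$.

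The main obstacle, and in fact the reason condition (\ref{alo}) appears in the hypotheses at all, is this tight balancing in the $A_2$ estimate: the $L^q$ tail of $\h$ must decay at exactly the rate $R^{-N/r}$ in order to offset the H\"older loss coming from integrating the Newton kernel on a ball of radius comparable to $|x|$. The outer regions $A_1$ and $A_3$ are handled by the crude $L^\infty$ bound on the kernel together with $\h\in L^1$, so no further ingredient is needed beyond the three-region splitting.
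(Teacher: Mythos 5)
Your argument is correct, and it is worth noting that the paper itself offers no proof of this statement: Lemma~\ref{AO} is simply quoted from Allegretto--Odiobala \cite[Lemma 4]{AO}, so you are supplying the missing argument rather than reproducing one. Your three-region splitting does exactly what is needed: on $A_1=B_{|x|/2}(0)$ and on $A_3$ one has $|x-y|\geq|x|/2$, so the kernel bound together with $\h\in L^1$ gives $C|x|^{-(N-2)}$; on $A_2=B_{|x|/2}(x)$ the inclusion $A_2\subset\R^N\setminus B_{|x|/2}(0)$ lets you invoke (\ref{alo}) with $R=|x|/2$, and since $q>N/2$ gives $(N-2)r<N$, the H\"older factor $\bigl(\int_{A_2}|x-y|^{-(N-2)r}dy\bigr)^{1/r}\sim|x|^{N/r-(N-2)}$ cancels the $|x|^{-N/r}$ tail decay exactly, which is indeed the whole point of hypothesis (\ref{alo}). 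Two small steps deserve an explicit line. First, the identification of the Riesz-representation solution $w\in{\cal D}^{1,2}(\R^N)$ with the Newtonian potential $c_N\Gamma*\h$ is not automatic: one should note that $\h\in L^{2N/(N+2)}(\R^N)$ (by interpolation between $L^1$ and $L^q$), so the potential lies in ${\cal D}^{1,2}(\R^N)$ and solves the same weak equation, whence the difference is a harmonic function in ${\cal D}^{1,2}(\R^N)$ and therefore vanishes. Second, the case $|x|\leq 1$ does not even require elliptic regularity: the same splitting into $\{|x-y|\leq 1\}$ (H\"older with $L^q$) and $\{|x-y|>1\}$ ($L^1$) bounds $w$ directly. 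With those remarks added, your proof is complete and self-contained.
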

Combining the estimates (\ref{positive}) and (\ref{below}) with
Lemma~\ref{AO}, we conclude there exists $0<\mu_0\leq \mu_2$ such
that for all $0\leq\mu\leq\mu_0$ the function $\bu$ is positive
and $\bu(x)\geq\frac{\Cthree}{|x|^{N-2}}$ for $x\in
B_\rho(x_0)^C$. This completes the proof of Theorem~\ref{thm0}.
\end{altproof}

\section{A positive solution in $\R^N$}\label{cinco}
We now turn to equation (\ref{eq}).
\begin{Thm}\label{thm}
Under\/ {\rm ({\bf H}$\A$)}, {\rm ({\bf H}$g$)}, {\rm ({\bf
H}$\B$)}, {\rm ({\bf H}$\lambda$)} and\/ {\rm ({\bf H}$\h$)},
there exists\/ $\mu_0>0$ such that for all\/ $0\leq\mu\leq\mu_0$
equation\/ {\rm (\ref{eq})} has a positive weak solution $u_\mu\in
{\cal H}\cap C^{1,\alpha}_{{\rm loc}}(\R^N)$. Furthermore, there
exists\/ $\Cthree>0$ such that for all\/ $0\leq\mu\leq\mu_0$ this
weak solution\/ $u_\mu$ satisfies
\begin{equation}\label{large}
u_\mu(x)\geq\frac{\Cthree}{|x|^{N-2}}\quad{\rm for\ large}\ |x|.
\end{equation}
\end{Thm}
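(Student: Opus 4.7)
The strategy is to use the solution $\bu$ of (\ref{boundb}) produced by Theorem~\ref{thm0}, with $k(s)=s^\beta$ as in (\ref{kkkk}), as a weak subsolution of (\ref{eq}), and then to recover a solution $u_\mu\geq\bu$ by minimizing the natural energy functional of (\ref{eq}) over the obstacle set $M=\{u\in{\cal H}:u\geq\bu\text{ a.e.}\}$. I take $\mu_0$ to be the one supplied by Theorem~\ref{thm0}.

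\medskip
\noindent\textbf{Subsolution property.} For $k(s)=s^\beta$ the equation $k(\varsigma)=1$ gives $\varsigma=1$, so $\llll=l$ by (\ref{llll}). Since $\D(x)\le 1$ and $l\le s_0$ by (\ref{lmenors}), the bound $\bu\le\llll\D$ from Theorem~\ref{thm0} yields $\bu\le l\le s_0$ pointwise, and (\ref{menor}) evaluated at $s=\bu(x)$ gives $\bx g(\bu)\le\lambda\ax\bu\,k(\bu/(l\D))$. Plugging this into the equation (\ref{boundb}) satisfied by $\bu$,
$$-\Delta\bu\;=\;\lambda\ax\bu\Bigl[1-k\bigl(\tfrac{\bu}{l\D}\bigr)\Bigr]-\mu\hx\;\le\;\lambda\ax\bu-\bx g(\bu)-\mu\hx,$$
so $\bu$ is a weak subsolution of (\ref{eq}).

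\medskip
\noindent\textbf{Variational setup and coercivity (the main difficulty).} Set $G(s)=\int_0^{s^+}g(t)\,dt$ and define $J_\mu:M\to\R\cup\{+\infty\}$ by
$$J_\mu(u)=\tfrac12\|u\|^2-\tfrac{\lambda}{2}\!\int\ax u^2+\int\bx G(u)+\mu\!\int\hx u,$$
with the convention $J_\mu(u)=+\infty$ when an integral diverges. The set $M$ is convex and strongly closed in $\mathcal H$, hence weakly closed. From (\ref{zero}) one has $G(s)\le Cs^{2+\beta}$ for $0\le s\le l$, so using $\bu\le l\D$ and (\ref{limsup}) one obtains
$$\int\bx G(\bu)\;\le\;C\!\int \ax\D^{-\beta}(l\D)^{2+\beta}\;=\;C\!\int \ax\D^2\;<\;\infty,$$
the last integral being finite by H\"older and $\ax\in L^{N/2}(\R^N)$; in particular $J_\mu(\bu)<\infty$. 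The heart of the argument is coercivity of $J_\mu$ on $M$. Assume toward contradiction that $u_n\in M$, $J_\mu(u_n)\le C$, and $\|u_n\|\to\infty$, and set $v_n=u_n/\|u_n\|\weak v\ge 0$. The compactness of the quadratic form $u\mapsto\int\ax u^2$ on $\mathcal H$, which follows from $\ax\in L^{N/2}(\R^N)$ via Vitali's theorem, yields $\int\ax v_n^2\to\int\ax v^2$, so
$$\liminf_{n}\frac{J_\mu(u_n)}{\|u_n\|^2}\;\ge\;\tfrac12-\tfrac{\lambda}{2}\!\int\ax v^2+\liminf_{n}\frac{1}{\|u_n\|^2}\!\int\bx G(u_n).$$
Three cases arise. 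If $v\equiv 0$ the right-hand side is at least $1/2$. If $\{v>0\}\cap\{\bx>0\}$ has positive measure, then $u_n\to\infty$ a.e.\ on this set, (\ref{infinity}) gives $G(u_n)/u_n^2\to\infty$ a.e.\ there, and Fatou forces the last $\liminf$ to be $+\infty$. Finally, if $v\not\equiv 0$ but $\bx=0$ a.e.\ on $\{v>0\}$, then $\operatorname{supp}v\subset B_0$, and the Lipschitz hypothesis on $\partial B_0$ in (H$\B$) lets us view $v\in\mathcal D^{1,2}(\operatorname{int} B_0)$; by the definition of $\lambda_*$ and weak lower semicontinuity of the Dirichlet integral,
$$\int\ax v^2\;\le\;\frac{1}{\lambda_*}\!\int|\nabla v|^2\;\le\;\frac{1}{\lambda_*},$$
and (H$\lambda$) gives $\liminf J_\mu(u_n)/\|u_n\|^2\ge(1-\lambda/\lambda_*)/2>0$. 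Each case contradicts $J_\mu(u_n)\le C$, establishing coercivity.

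\medskip
\noindent\textbf{Existence of $u_\mu$ and conclusion.} Weak lower semicontinuity of $J_\mu$ on $M$ follows from l.s.c.\ of the norm, compactness of $u\mapsto\int\ax u^2$, Fatou applied to the nonnegative $\int\bx G$ along an a.e.\ convergent subsequence, and continuity of the linear harvesting term. The direct method then produces a minimizer $u_\mu\in M$. Since $u_\mu+t\phi\in M$ for $t\ge 0$ and any $\phi\in\mathcal D(\R^N)$ with $\phi\ge 0$, the one-sided Euler inequality shows $u_\mu$ is a weak supersolution of (\ref{eq}). On the open set $\{u_\mu>\bu\}$ two-sided variations are admissible, so $u_\mu$ satisfies the Euler equation there; on $\{u_\mu=\bu\}$ one uses that $\bu$ is a subsolution of (\ref{eq}) to sandwich $-\Delta u_\mu=-\Delta\bu$ between the two inequalities and recover equality. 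Thus $u_\mu$ is a weak solution of (\ref{eq}). Elliptic regularity, exactly as in the Remark following (H$\mu$), gives $u_\mu\in C^{1,\alpha}_{\mathrm{loc}}(\R^N)$; finally $u_\mu\ge\bu$ together with (\ref{baixo}) from Theorem~\ref{thm0} yields (\ref{large}).
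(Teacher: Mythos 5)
Your subsolution step and your coercivity case analysis are essentially the paper's own arguments and are fine. The genuine gap is in the final paragraph, where you minimize the \emph{untruncated} functional $J_\mu(u)=\frac12\|u\|^2-\frac{\lambda}{2}\int\ax u^2+\int\bx G(u)+\mu\int\hx u$ over $M=\{u\geq\bu\}$ and then assert that two-sided variations on $\{u_\mu>\bu\}$ yield the Euler equation. Hypothesis ({\bf H}$g$) gives no upper growth bound on $g$ at infinity (only (\ref{zero}) near zero and the lower bound (\ref{infinity})); $g$ may grow arbitrarily fast, e.g.\ like $e^{e^s}$. Since $M$ is unbounded above, the minimizer $u_\mu$ is not a priori in $L^\infty$, and from $\int\bx G(u_\mu)<\infty$ you cannot conclude that $\int\bx g(u_\mu)\varphi$ is finite, nor dominate the difference quotients $\frac1t\int\bx[G(u_\mu+t\varphi)-G(u_\mu)]$ for $\varphi$ of compact support. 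So the functional need not be differentiable even in compactly supported directions, the one-sided Euler inequality and the ``two-sided variations'' step are not justified, and the heuristic ``sandwich'' on the contact set $\{u_\mu=\bu\}$ (equating $-\Delta u_\mu$ with $-\Delta\bu$ there) is not a valid argument without much more structure. Moreover, without first knowing $u_\mu$ satisfies a differential inequality of the form $-\Delta u_\mu-\lambda\ax u_\mu\leq 0$, you have no route to an $L^\infty$ bound.

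This is precisely the obstruction the paper's proof is designed to remove: it replaces $g$ above level $m$ by the subcritical power truncation $j_m$ in (\ref{jj}), with $1<p\leq(N+2)/(N-2)$, proves differentiability of $u\mapsto\int\bx J_m(u)$ in compactly supported directions (Lemma~\ref{differentiability}, which uses exactly the bound $|j_m|\leq\eps s+C_\eps s^p$), establishes coercivity \emph{uniformly in $m$} via $j=\inf_m j_m$ and the auxiliary lemma $j(s)/s\to\infty$, and so obtains genuine solutions $u^m_\mu$ of the truncated equation (\ref{mmm}). Only then can it invoke \cite[Theorem~8.17]{GT} to get $\sup u^m_\mu\leq\Csix\|u^m_\mu\|$, combine this with $I^m_\mu(u^m_\mu)\leq I_\mu(\bu)<0$ (from (\ref{negativity})) and uniform coercivity to obtain an $L^\infty$ bound independent of $m$, and finally choose $m$ so large that the truncation is inactive, recovering a solution of (\ref{eq}). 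Your proposal would be correct if ({\bf H}$g$) included a polynomial (subcritical) upper bound on $g$, but as stated it skips the truncation--uniform-bound mechanism that the theorem's generality requires; to repair it you should insert the $j_m$, the differentiability lemma, and the uniform $L^\infty$ estimate, i.e.\ essentially the paper's Section~\ref{cinco}.
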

\begin{proof}
We take the function $k$ as in (\ref{kkkk}) and apply
Theorem~\ref{thm0} to obtain a positive solution $\bu$ of
(\ref{boundb}) for $0\leq\mu\leq\mu_0$. Using (\ref{menor}) and
\begin{equation}\label{s0}
\bu\leq\llll \D=\varsigma l\D=l\D\leq l\leq s_0
\end{equation}
(see (\ref{numero}), (\ref{llll}), ({\bf H}$k$) and
(\ref{lmenors})), the function $\bu$ satisfies
$$
-\Delta\bu\leq\lambda \ax \bu-\bx g(\bu)-\mu \hx,
$$
and so is a subsolution of our problem.

Fix any $1<p\leq (N+2)/(N-2)$. For all integers $m$ with $m\geq 1$
we define $j_m:\R\to\R$ by
\begin{equation}\label{jj}
j_m(s)=\left\{\begin{array}{ll}
g(s)&{\rm for}\ s\leq m,\\
g(m)-m^p+s^p&{\rm for}\ s>m.
\end{array}\right.
\end{equation}
We also define $j:\R\to\R$ by
$$
j(s)=\inf_{m\geq 1}j_m(s).
$$
The function $j$ is measurable and in $L^1_{{\rm loc}}(\R)$.
\begin{Lem}
The function $j$ satisfies
\begin{equation}\label{j}
\lim_{s\to+\infty}\frac{j(s)}{s}=+\infty.
\end{equation}
\end{Lem}
\begin{proof}
By contradiction, suppose there exists a constant $C>0$ and a
sequence $s_n\to+\infty$ such that $\frac{j(s_n)}{s_n}\leq C$.
Then there also exists a sequence $(m_n)$ with $m_n\geq 1$ and
$$\frac{j_{m_n}(s_n)}{s_n}\leq C+1.$$ From the definition of $j_{m_n}$ and using
$\frac{g(s_n)}{s_n}\to+\infty$, it follows $s_n>m_n$ for large
$n$. So for large $n$
$$
\frac{j_{m_n}(s_n)}{s_n}=\frac{g(m_n)-m_n^p+s_n^p}{s_n}=\frac{g(m_n)-m_n^p}{s_n}+s_n^{p-1}\leq
C+1.
$$
The last inequality implies $g(m_n)<m_n^p$ for large $n$ and
$m_n\to+\infty$. Thus
$$
C+1\geq\frac{j_{m_n}(s_n)}{s_n}\geq\frac{g(m_n)-m_n^p}{m_n}+s_n^{p-1}=
\frac{g(m_n)}{m_n}-m_n^{p-1}+s_n^{p-1}\geq\frac{g(m_n)}{m_n}
$$
for large $n$. From assumption (\ref{infinity}),
$\lim_{n\to\infty}\frac{g(m_n)}{m_n}=+\infty$. We have reached a
contradiction. This proves (\ref{j}).
\end{proof}
For $0\leq\mu\leq\mu_0$ the functions $\bu$ satisfies $0<\bu\leq
\llll\D\leq\llll= l\leq 1\leq m$ (see (\ref{l}) and (\ref{llll})).
Since every $j_m$ coincides with $g$ up to $m$, we have $\bu$
satisfies
$$
-\Delta\bu\leq\lambda \ax \bu-\bx j_m(\bu)-\mu \hx.
$$
For each $0\leq\mu\leq\mu_0$, we define the set
$$
M_\mu=\left\{u\in {\cal H}:\bu\leq u\/\ {\rm\, a.e.\ in}\
\R^N\right\}.
$$
The set $M_\mu$ is weakly closed. Let $J_m(s)=\int_0^sj_m(t)\,dt$
and $J(s)=\int_0^sj(t)\,dt$. The function $J$ is continuous. For
$m\geq 1$ we also define $I^m_\mu:M_\mu\to\R\cup\{+\infty\}$ by
$$
I^m_\mu(u)= \frac{1}{2}\|u\|^2-\frac{\lambda}{2}\int \ax u^2+ \int
\bx J_m(u)+\mu\int \hx  u
$$
if $\int \bx J_m(u)<\infty$, and $I^m_\mu(u)=+\infty$ otherwise.
Similarly, we define $I^0_\mu$ with $J$ in the place of $J_m$.

\begin{Lem}\label{coercivity}
The functionals $I^m_\mu$ are coercive on $M_\mu$, uniformly in
$m$ and $\mu$ with $m\geq 1$ and $0\leq\mu\leq\mu_0$, i.e.\ for
each $L>0$, there exists $R>0$ such that for all $m\geq 1$,
$0\leq\mu\leq\mu_0$ and $u\in M_\mu$, if\/ $\|u\|>R$ then
$I^m_\mu(u)>L$.
\end{Lem}
\begin{proof} The argument is similar to the one in \cite[proof of Theorem~6]{DM2}.
Suppose by contradiction there exists $\mu_n\in[0,\mu_0]$,
$m_n\geq 1$ and $u_n\in M_{\mu_n}$ with $\|u_n\|\to\infty$, such
that $I^{m_n}_{\mu_n}(u_n)\leq C$. From the definition of $j$ we
also have $I^0_{\mu_n}(u_n)\leq C$. Clearly
$$
c_n^2:=\int \ax u_n^2\longrightarrow+\infty
$$
since $J$ is nonnegative, and $\int hu\geq 0$ for all $u\in
M_\mu$. We define a sequence of functions, $(v_n)$, with
$v_n=\frac{u_n}{c_n}$, so that $\int \ax v_n^2=1$ and
\begin{equation}\label{vn}
\frac{1}{2}\|v_n\|^2-\frac{\lambda}{2}+ \frac{1}{c_n^{2}}\int \bx
J(c_nv_n)+\frac{\mu_n}{c_n}\int \hx  v_n\leq \frac{C}{c_n^2}.
\end{equation}
Inequality (\ref{vn}) implies $\|v_n\|$ is uniformly bounded in
$n$. Up to a subsequence, $v_n\weak v$ in ${\cal H}$ and $v_n\to
v$ a.e.\ in $\R^N$. The function $v$ is nonnegative. Inequality
(\ref{j}) implies $\lim_{s\to+\infty}J(s)/s^2=+\infty$. Taking the
limit inferior on both sides of (\ref{vn}), and using Fatou's
lemma,
$$
\frac{1}{2}\|v\|^2-\frac{\lambda}{2}+\int_{\left\{x\in\R^N:v(x)>0\right\}}\bx
\times(+\infty)v^2\leq 0
$$
The function $v$ must be zero almost everywhere on the set where
the function $\B$ is positive, i.e.\ (aside from a set of measure
zero) $v$ must have support in~$B_0$. We also obtain
$\|v\|^2\leq\lambda$. On the other hand, since $\int \A v_n^2=1$
and $\int \A v_n^2\to\int \A v^2$, the function $v\not\equiv 0$
and $\int \A v^2=1$. If $B_0$ has measure zero, then we are done.
Otherwise, ({\bf H}$\B$) implies $v\in{\cal D}^{1,2}({\rm
int}\,B_0)$ and
$$
\lambda_*\leq\frac{\|v\|^2}{\int \A v^2}\leq\lambda.
$$
This contradicts $\lambda<\lambda_*$. The lemma is proved.
\end{proof}

For $0\leq\mu\leq\mu_0$ and $m\geq 1$, the functional $I^m_\mu$
has a minimizer $u^m_\mu$ on $M_\mu$, which of course is positive.
\begin{Lem}\label{differentiability}
Suppose $v\in {\cal H}(\R^N)$ with compact support. For\/
$u\in{\cal H}$ with $\int \bx J_m(u)<\infty$, the functional
$I^m_\mu$ is differentiable in the direction $v$ and $$\textstyle
\left.\frac{d}{dt}\int \bx J_m(u+tv)\right|_{t=0}= \int \bx
j_m(u)v.$$
\end{Lem}
\begin{proof}
Our assumption on $p$ and $\B\in L^\infty_{{\rm loc}}(\R^N)$ imply
$\int \bx J_m(u+tv)<\infty$. Suppose $0<|t|\leq 1$.
\begin{eqnarray*}
\frac{\int \B
[J_m(u+tv)-J_m(u)]}{t}&=&\int_{\left\{x\in\R^N:v(x)\neq 0\right\}}
\B
\left(\frac{1}{tv}\int_u^{u+tv}j_m(s)\,ds\right)v\,dx\\
&=&\int_{\left\{x\in\R^N:v(x)\neq 0\right\}} \B \ft v\,dx,
\end{eqnarray*}
where $\ft :\left\{x\in\R^N:v(x)\neq 0\right\}\to\R$ is defined by
$$\ft (x):=\frac{1}{tv(x)}\int_{u(x)}^{u(x)+tv(x)}j_m(s)\,ds.$$
We have
$$
|\ft |\leq\eps(u^++v^+)+C_\eps((u^+)^{p}+(v^+)^{p}).
$$
The function $\B[\eps(u^++v^+)+C_\eps((u^+)^{p}+(v^+)^{p})]v$ is
integrable. So the assertion of the lemma follows from Lebesgue's
Dominated Convergence Theorem.
\end{proof}
Using Lemma~\ref{differentiability}, $I^m_\mu$ is differentiable
at $u^m_\mu$ in the direction of functions $\varphi$ of compact
support. As in Lemma~\ref{m} one can prove $u^m_\mu$ is a solution
of
\begin{equation}\label{mmm}
-\Delta u=\lambda \ax  u-\bx j_m(u)-\mu \hx,
\end{equation}
by showing $(I^m_\mu)'(u^m_\mu)\varphi=0$ for all $\varphi\in{\cal
D}(\R^N)$. The functions $u^m_\mu$ satisfy
$$-\Delta u^m_\mu-\lambda \ax  u^m_\mu\leq 0.$$
By \cite[Theorem~8.17]{GT} we have
\begin{equation}\label{unif}
\sup_{\R^N}u^m_\mu\leq \Csix\|u^m_\mu\|,
\end{equation}
where the constant $\Csix$ depends only on $N$, $\lambda$ and the
norm $|\A|_{L^\infty(\R^N)}$. Furthermore, from (\ref{menor}),
(\ref{s0}), $s_0\leq 1\leq m$ and (\ref{negativity}), we have
$$I^m_\mu(u^m_\mu)\leq I_\mu(\bu) <0.$$ So using
Lemma~\ref{coercivity} there exists an $R>0$ such that
$\|u^m_\mu\|\leq R$. It follows $\sup_{\R^N}u^m_\mu\leq \Csix
R=:\Cseven$. If we take any constant $m\geq \Cseven$, the function
$u^m_\mu$ is a solution of (\ref{eq}). Since the right-hand-side
of (\ref{eq}) belongs to $L^s_{{\rm loc}}(\R^N)$ and $s>N$ by
elliptic regularity theory $u\in C^{1,\alpha}_{{\rm loc}}(\R^N)$
for some $\alpha>0$. Estimate (\ref{large}) is immediate from
(\ref{baixo}). The proof of Theorem~\ref{thm} is complete.
\end{proof}

Suppose $\tilde{\D}$ is another function satisfying the properties
that we used concerning the function $\D$, i.e.\ suppose
$\tilde{\D}\in {\cal H}$ is continuous, $\tilde{\D}\not\equiv 0$
and $-\Delta\tilde{\D}\geq 0$. Multiplying the last inequality by
$\tilde{\D}^-$ and integrating, $\tilde{\D}^-\equiv 0$. From {\rm
\cite[Theorem~8.19]{GT}}, there exists $C$ such that
$\inf_{x\in\overline{B_{1}(0)}}\tilde{\D}(x)=C>0$. Hence,
\begin{equation}\label{cccc}
\tilde{\D}(x)\geq \frac{C}{|x|^{N-2}}
\end{equation}
for $x\in{\partial B_{1}(0)}$. As $x\mapsto\frac{C}{|x|^{N-2}}$ is
harmonic in $B_1(0)^C$, by the maximum principle inequality {\rm
(\ref{cccc})} also holds for $x\in B_1(0)^C$. So $\tilde{\D}\geq
C\D$. If $\B\leq\tilde{C}_1\A \tilde{\D}^{-\beta}$ for some
constant $\tilde{C}_1>0$, then $\B\leq C_1\A \D^{-\beta}$ for some
constant $C_1>0$. So we cannot apply the proof above if $\B$ grows
faster than in {\rm (\ref{limsup})}. In addition, inequality {\rm
(\ref{large})}  shows the bound $\bu\leq \llll\D$ is sharp.

\section{The case where $\B$ grows fast}\label{fast}

Equation (\ref{eq}) may have positive solutions for $\B$ growing
faster than in (\ref{limsup}), or in other words for $\D$ going
faster to zero than $1/|x|^{N-2}$ as $|x|\to\infty$. We now prove
a theorem regarding such a situation. We will relax the growth
condition on $\B$ at infinity and the condition on $g$ at zero, at
the expense of assuming a more restrictive hypothesis for $\h$.

Instead of ({\bf H}$g$), ({\bf H}$\B$) and ({\bf H}$\h$) we now
assume
\begin{enumerate}
\item[({\bf H}$g$)$^\prime$] The function $g:\R\to\R^+_0$ is
continuous, with $g(s)=0$ for $s\leq 0$. Furthermore,
$$
\lim_{s\to 0}\frac{g(s)}{s}=0
$$
and (\ref{infinity}) holds. \item[({\bf H}$\B$)$^\prime$] The
measurable function $\B :\R^N\to\R$ is nonnegative, not
identically equal to zero, and satisfies $\B=\lambda\A
\varUpsilon$ with $\varUpsilon\in L^\infty_{{\rm loc}}(\R^N)$. Let
$B_0=\left\{x\in\R^N: \varUpsilon(x) =0\right\}$. We assume either
$B_0$ has measure zero, or $B_0=\overline{{\rm int}\,B_0}\neq\R^N$
with ${\rm int}\,B_0\neq\emptyset$ and $\partial B_0$ Lipschitz.
\item[({\bf H}$\h$)$^\prime$] The measurable, nonnegative and not
identically equal to zero function $\h$ has compact support and
there exists a constant $\Cnine$ such that $\h\leq \Cnine\A$.
\end{enumerate}

\begin{Thm}\label{thm2}
Under\/ {\rm ({\bf H}$\A$)}, {\rm ({\bf H}$g$)$^\prime$}, {\rm
({\bf H}$\B$)}$^\prime$, {\rm ({\bf H}$\lambda$)} and\/ {\rm ({\bf
H}$\h$)$^\prime$}, there exists\/ $\mu_3>0$ such that for all\/
$0\leq\mu\leq\mu_3$ equation\/ {\rm (\ref{eq})} has a positive
weak solution \/ $u_\mu\in {\cal H}\cap C^{1,\alpha}_{{\rm
loc}}(\R^N)$. Furthermore, there exists a constant $C>0$ such
that, for all\/ $0\leq\mu\leq\mu_3$,
$\|u_\mu\|_{L^\infty(\R^N)}\leq C$.
\end{Thm}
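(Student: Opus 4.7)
The plan is to follow the three-step architecture of the proof of Theorem~\ref{thm}: produce a positive subsolution $\underline{u}$ of (\ref{eq}) valid uniformly for small $\mu$, minimize the truncated functional $I^m_\mu$ of Section~\ref{cinco} over $M=\{u\in\mathcal{H}:u\geq\underline{u}\}$, and remove the truncation via a uniform $L^\infty$ bound on the minimizer. The substantially new task is producing $\underline{u}$, because (\ref{limsup}) is dropped and Theorem~\ref{thm0} is no longer available.

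To construct $\underline{u}$, observe that because $\lambda>\lambda_1$ and $\h$ has compact support, we may fix $R>0$ large enough that $\mathrm{supp}\,\h\subset B_R(0)$ and so that the first Dirichlet eigenvalue $\lambda_1^R$ of the weighted problem $-\Delta\phi=\lambda_1^R\A\phi$ on $B_R(0)$ satisfies $\lambda_1^R<\lambda$; let $\phi$ denote the corresponding positive eigenfunction extended by zero to $\R^N$, and set $c_0:=\inf_{\mathrm{supp}\,\h}\phi>0$ and $M_R:=\|\varUpsilon\|_{L^\infty(B_R(0))}<\infty$. Using $\B=\lambda\A\varUpsilon$ and $\h\leq\Cnine\A$, the desired pointwise inequality $-\Delta(\epsilon\phi)\leq\lambda\A\epsilon\phi-\B g(\epsilon\phi)-\mu\h$ in $B_R(0)$ simplifies, after dividing by $\A$, to
$$(\lambda-\lambda_1^R)\,\epsilon\phi\;\geq\;\lambda\,\varUpsilon\,g(\epsilon\phi)+\mu\Cnine\,\chi_{\mathrm{supp}\,\h}.$$
By ({\bf H}$g$)$^\prime$ we pick $\epsilon>0$ small enough that $\lambda M_R\,g(\epsilon\phi)\leq\tfrac{1}{2}(\lambda-\lambda_1^R)\,\epsilon\phi$ wherever $\phi>0$, and then set $\mu_3:=\epsilon c_0(\lambda-\lambda_1^R)/(2\Cnine)$; the displayed inequality then holds throughout $B_R(0)$ for every $0\leq\mu\leq\mu_3$. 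Combined with Hopf's lemma (which gives the correct sign for the boundary term in the integration by parts against a nonnegative test function) and with $\h\equiv 0$ and $\epsilon\phi\equiv 0$ outside $B_R(0)$, this shows that $\underline{u}:=\epsilon\phi$ is a weak subsolution of (\ref{eq}) on $\R^N$, uniformly in $\mu\in[0,\mu_3]$.

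With $\underline{u}$ in hand, I would follow the pattern of Section~\ref{cinco} verbatim. Fix $1<p\leq(N+2)/(N-2)$ and form $j_m$, $j$, $J_m$, $J$, $I^m_\mu$ as in Section~\ref{cinco}; define $M=\{u\in\mathcal{H}:u\geq\underline{u}\text{ a.e.}\}$. Since $\underline{u}$ is bounded with compact support and $\varUpsilon\in L^\infty_{\mathrm{loc}}$, $I^m_\mu(\underline{u})$ is finite and uniformly bounded above in $m$ and $\mu\in[0,\mu_3]$. The proof of Lemma~\ref{coercivity} relies only on $\lambda<\lambda_*$ and the structural hypothesis on $B_0$---both furnished by ({\bf H}$\lambda$) and ({\bf H}$\B$)$^\prime$---so it carries over unchanged, yielding a minimizer $u^m_\mu\in M$ with $\|u^m_\mu\|\leq R_1$ uniformly. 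The argument of Lemma~\ref{m}, via Lemma~\ref{differentiability} applied to the compactly supported $\underline{u}$, then shows that $u^m_\mu$ weakly solves $-\Delta u=\lambda\A u-\B j_m(u)-\mu\h$. The inequality $-\Delta u^m_\mu\leq\lambda\A u^m_\mu$ and \cite[Theorem~8.17]{GT} give $\sup u^m_\mu\leq\Csix R_1=:\Cseven$ uniformly in $m,\mu$; taking any $m\geq\Cseven$ kills the truncation, and $u_\mu:=u^m_\mu$ solves (\ref{eq}) with $\|u_\mu\|_{L^\infty(\R^N)}\leq\Cseven$. Elliptic regularity gives $u_\mu\in C^{1,\alpha}_{\mathrm{loc}}(\R^N)$, and the strong maximum principle, applied using $u_\mu\geq\underline{u}\not\equiv 0$, yields $u_\mu>0$ everywhere.

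The main obstacle is the subsolution construction: lacking a global cap $\llll\D$, we are forced to localize in a single ball $B_R(0)$ and to simultaneously absorb $\B g(\epsilon\phi)$ and $\mu\h$ against the same spectral gap $\lambda-\lambda_1^R$, using ({\bf H}$g$)$^\prime$ for the former and the combination $\h\leq\Cnine\A$ with $\mathrm{supp}\,\h\subset B_R(0)$ and $\phi\geq c_0$ on $\mathrm{supp}\,\h$ for the latter. The use of a compactly supported subsolution, rather than one decaying like $\D$, is what frees us from imposing (\ref{limsup}).
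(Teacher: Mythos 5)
Your argument is correct, but the crucial step---manufacturing a positive subsolution of (\ref{eq}) valid uniformly for small $\mu$ without the growth restriction (\ref{limsup})---is done by a genuinely different device than in the paper. The paper first solves the auxiliary problem (\ref{comdois}), obtained by strengthening the absorption ($\tilde{\varUpsilon}=\max\{\varUpsilon,1\}$, $\tilde{g}(u)=g(u)+(u^+)^2$), proves that its nonnegative minimizer $\uu$ is positive \emph{everywhere} via Harnack plus the unique continuation principle, and then checks $\mu\h\leq\tilde{\B}\tilde{g}(\uu)$ on ${\rm supp}\,\h$ for $\mu\leq\lambda c^2/\Cnine$, so that $\uu$ itself is a globally positive subsolution of (\ref{eq}); the truncation scheme of Section~\ref{cinco} is then run above $\uu$, and positivity of the final solution is inherited from $u_\mu\geq\uu>0$ with no further work. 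You instead use a compactly supported subsolution $\epsilon\phi$, with $\phi$ the principal eigenfunction of the $\A$-weighted problem on a large ball containing ${\rm supp}\,\h$, absorbing both $\B g(\epsilon\phi)$ (via ({\bf H}$g$)$^\prime$ and $\varUpsilon\in L^\infty_{\rm loc}$) and $\mu\Cnine\A$ (via $\phi\geq c_0$ on ${\rm supp}\,\h$) into the spectral gap $\lambda-\lambda_1^R$; this is more elementary (no auxiliary minimization, no unique continuation), makes the origin of $\mu_3$ transparent, and the extension by zero is indeed a weak subsolution since $\partial\phi/\partial\nu\leq 0$ on $\partial B_R$ (Hopf is not even needed, only $\phi\in C^1(\overline{B_R})$, available since $\A\in L^\infty$). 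The price is that your obstacle vanishes outside $B_R$, so positivity of $u_\mu$ on all of $\R^N$ is not automatic, and your one-line appeal to the strong maximum principle deserves a little more care because the harvesting term has the wrong sign: write $-\Delta u_\mu+c\,u_\mu=-\mu\h$ with $c=-\lambda\A+\B\,g(u_\mu)/u_\mu\in L^\infty_{\rm loc}$ (using $g(s)/s\to 0$ as $s\to 0$, continuity of $g$, $u_\mu\leq\Cseven$ and $\B\in L^\infty_{\rm loc}$), note the right-hand side is nonnegative off ${\rm supp}\,\h$, where the minimum principle applies on each connected component, and observe that no component can carry $u_\mu\equiv 0$ since $u_\mu\geq\epsilon c_0>0$ on ${\rm supp}\,\h$ (hence on the boundary of any such component) and $u_\mu>0$ on $B_R$. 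With that line added, the remaining steps are exactly as you describe: Lemma~\ref{coercivity} only uses $\lambda<\lambda_*$, the structure of $B_0$ and (\ref{infinity}), all provided by ({\bf H}$\B$)$^\prime$, ({\bf H}$\lambda$) and ({\bf H}$g$)$^\prime$; $I^m_\mu(\epsilon\phi)$ is uniformly bounded because $\epsilon\phi\leq 1\leq m$ has compact support; and the uniform bound $\sup u^m_\mu\leq\Csix R_1=\Cseven$ together with the choice $m\geq\Cseven$ removes the truncation and yields the uniform $L^\infty$ estimate of the statement.
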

\begin{proof}
To solve equation (\ref{eq}), we first consider
\begin{equation}\label{comdois}
-\Delta u=\lambda\A u-2\tilde{\B} \tilde{g}(u),
\end{equation}
where $\tilde{\B}=\lambda\A\tilde{\varUpsilon}$, with
$\tilde{\varUpsilon} =\max\{\varUpsilon,1\}$, and
$\tilde{g}(u)=g(u)+(u^+)^2$.
Obviously, zero is a solution to this equation. We define the set
\begin{equation}\label{Mbig}
M=\left\{u\in {\cal H}:u\geq 0\/\ {\rm\, a.e.\ in}\ \R^N\right\}.
\end{equation}
For all integers $m\geq 1$, we define $I^m:M\to\R\cup\{+\infty\}$
by
$$
I^m(u)= \frac{1}{2}\|u\|^2-\frac{\lambda}{2}\int \ax u^2+ 2\int
\tilde{\bx} J_m(u)
$$
if $\int \tilde{\bx} J_m(u)<\infty$, and $I^m(u)=+\infty$
otherwise. Here $J_m$ is as in Section~\ref{cinco} with $g$
replaced by $\tilde{g}$. As in Lemma~\ref{coercivity}, the
functionals $I^m$ are coercive on $M$, uniformly in $m$. Indeed,
$\{x\in\R^N:\tilde{\B}(x)=0\}=\emptyset$.
For $m\geq 1$, the functional $I^m$ has a minimizer $\uu^m$ on
$M$. As a consequence of the analogue of Lemma~\ref{negative},
$I^m(\uu^m)<0$. Lemma~\ref{differentiability} applies as well as
the subsequent discussion. Equation~(\ref{comdois}) has a
nonnegative solution $\uu\in C^{1,\alpha}_{{\rm loc}}(\R^N)$. We
observe that $\uu\not\equiv 0$ since it has negative energy. We
prove that $\uu$ is positive.
We may rewrite (\ref{comdois}) as
$$
-\Delta u=\lambda\A u(1-2\tilde{\varUpsilon} k(u)),
$$
with $k(s)=\tilde{g}(s)/s$ for $s\neq 0$ and $k(0)=0$. Suppose by
contradiction $\uu$ vanishes at some point $x_0$. Because $\uu$
and $k$ are continuous, $k(\uu(x_0))=0$ and
$\tilde{\varUpsilon}\in L^\infty_{{\rm loc}}(\R^N)$, there exist
$r>0$ such that $1-2\tilde{\varUpsilon}(x) k(\uu(x))>0$ for $x\in
B_r(x_0)$. Thus $-\Delta \uu(x)\geq 0$ in the sense of
distributions for $x\in B_r(x_0)$. From \cite[Theorem~8.19]{GT}, it follows $\uu\equiv 0$ in
$B_r(x_0)$. By the unique continuation principle (\cite[p.\
519]{simon}) $\uu\equiv 0$ in $\R^N$. We have reached a
contradiction so $\uu$ is positive.

There exists a constant $c>0$ such that $\uu(x)\geq c$ for $x$ in
the support of $\h$. Then $\tilde{g}(\uu(x))\geq c^2$ for $x$ in
the support of $\h$. Let $0\leq\mu\leq\mu_3:=\frac{\lambda
c^2}{\Cnine}$. Taking into account ({\bf H}$\B$)$^\prime$ and
({\bf H}$\h$)$^\prime$, $\tilde{\B}\geq\lambda\A$ and $\h\leq
\Cnine\A\leq\frac{\Cnine}{\lambda}\tilde{\B}$. Then in the support
of $\h$,
we have
$$
\mu\h\leq\frac{\lambda c^2}{\Cnine}\h\leq c^2\tilde{\B}\leq
\tilde{\B} \tilde{g}(\uu);
$$
thus $\mu\h\leq\tilde{\B} \tilde{g(}\uu)$ everywhere on $\R^N$. So
$\uu$ satisfies
$$
-\Delta \uu\ \leq\ \lambda\A \uu-\tilde{\B} \tilde{g}(\uu)-\mu\h\
\leq\ \lambda\A \uu-{\B}{g}(\uu)-\mu\h.
$$
We also have,
\begin{eqnarray*}
\tilde{I}_\mu(\uu)&:=& \frac{1}{2}\int|\nabla
\uu|^2-\frac{\lambda}{2}\int\A \uu^2+
\int{\B}{G}(\uu)+\mu\int\h \uu\\ 
& \leq & \frac{1}{2}\int|\nabla \uu|^2-\frac{\lambda}{2}\int\A
\uu^2+ \int\tilde{\B}\tilde{G}(\uu)+\mu\int\h \uu\ \leq\ C\ <\
\infty
\end{eqnarray*}
because $I^m(\uu)<0$, and $\h$ has compact support and belongs to
the space $L^\infty(\R^N)$.
(We could even take $C$ to be zero if we restricted
$0\leq\mu\leq\frac{\lambda c^2}{3\Cnine}$ because this would imply
$\mu\int\h \uu\leq\int\tilde{\B}\tilde{G}(\uu)$). Repeating the
arguments in Section~\ref{cinco} we obtain a positive solution
$u_\mu$ of (\ref{eq})
with $\tilde{I}_\mu(u_\mu)\leq\tilde{I}_\mu(\uu)$. The uniform
bound on the $L^\infty(\R^N)$ norm on $u_\mu$ follows from the
uniform coercivity in Lemma~\ref{coercivity} and~(\ref{unif}).
\end{proof}
We mention it is possible to construct examples where equation\/
{\rm (\ref{eq})} has a positive solution for a $\B$ growing faster
than in (\ref{limsup}) and an $\h$ without compact support.

\section{The case of a bounded domain}\label{bounded}

As we noted in the last paragraph of Section~\ref{cinco}, the
upper bound (\ref{limsup}) we imposed on $\B$ was the weakest one
under which our proof goes through. In this sense, the choice we
made for $\D$ in (\ref{def_d}) was the best one possible. To treat
the case of a bounded domain $\Omega$ we start by constructing the
best function $\D$ for this setting. This is done in the next
lemma. We note that in part {\bf (i)} we do not assume $\Omega$ is
bounded (having in mind future extensions to the case of unbounded
domains which are not the whole space $\R^N$). In fact, if one is
just concerned with the case of a bounded domain, then a shorter
proof of {\bf (i)} can be given.

\begin{Lem}\label{functiond}
Let $\Omega$ be a smooth domain in $\R^N$, $r>0$, $y_0\in\Omega$
with ${\rm dist}\,(y_0,\partial\Omega)>3r$, and $G$ be Green's
function of the first kind for $\Omega$. In \/~{\bf (ii)}
and\/~{\bf (iii)} assume $\Omega$ is bounded.

{\rm {\bf (i)}} There exists a function $\D\in
C^2(\overline{\Omega})$, superharmonic in $\Omega$ and harmonic in
$\Omega\setminus B_{r}(y_0)$, satisfying
\begin{equation}\label{dup}
cG(x,y_0)\leq\D(x)\leq CG(x,y_0)\quad{\rm for}\ x\in
\overline{\Omega}\setminus B_{2r}(y_0)
\end{equation}
for some constants $c,C>0$.\\
{\rm {\bf (ii)}} A function $\B :\Omega\to\R^+_0$ satisfies
\begin{equation}\label{ba}
\B\leq \overline{C}_1\A\, [{\rm
dist}\,(\,\cdot\,,\partial\Omega)]^{-\beta}.
\end{equation}
for some constant $\overline{C}_1>0$ if and only if the function
$\B$ satisfies
\begin{equation}\label{bainv}
\B\leq C_1\A{\D}^{-\beta}.
\end{equation}
for some constant\/ $C_1>0$ and the function $\D$ as in {\bf (i)}.\\
{\rm {\bf (iii)}} If $\tilde{\D}\in {\cal D}^{1,2}(\Omega)$ is
continuous, $\tilde{\D}\not\equiv 0$, $-\Delta\tilde{\D}\geq 0$
and $\B\leq\tilde{C}_1\A \tilde{\D}^{-\beta}$ for some constant
$\tilde{C}_1>0$, then $\B\leq C_1\A \D^{-\beta}$ for some constant
$C_1>0$.
\end{Lem}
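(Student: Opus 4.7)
\medskip
\noindent\textbf{Proof plan.} The central idea is to take $\D$ to be the Green potential of a smooth nonnegative bump supported in $B_r(y_0)$. Choose $\phi \in C^{\infty}_{c}(B_r(y_0))$ with $\phi \geq 0$, $\phi \not\equiv 0$, and set $\D(x) = \int_{\Omega} G(x,y)\phi(y)\,dy$, the unique solution of $-\Delta \D = \phi$ in $\Omega$ with zero Dirichlet data. Smoothness of $\phi$ and of $\partial\Omega$ yield $\D \in C^{2}(\overline{\Omega})$; superharmonicity of $\D$ in $\Omega$ and harmonicity off the support of $\phi$ are immediate from $-\Delta\D = \phi \geq 0$. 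For the bound (\ref{dup}), fix $x \in \overline{\Omega} \setminus B_{2r}(y_0)$: then $y \mapsto G(x,y)$ is positive and harmonic on $B_{2r}(y_0)$, so Harnack's inequality on $\overline{B_r(y_0)} \subset B_{2r}(y_0)$ produces a universal constant $C_0$ with $G(x,y_0)/C_0 \leq G(x,y) \leq C_0\, G(x,y_0)$ for all $y \in B_r(y_0)$, uniformly in $x$. Integrating against $\phi$ yields (\ref{dup}).

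For (ii) I would combine (i) with the standard boundary estimate $G(\cdot,y_0) \asymp \mathrm{dist}(\cdot,\partial\Omega)$ in a neighbourhood of $\partial\Omega$, which follows on a bounded smooth domain from the Hopf lemma applied to $G(\cdot,y_0)$ (which vanishes on $\partial\Omega$ with positive inward normal derivative). Combined with (i), this yields $\D \asymp \mathrm{dist}(\cdot,\partial\Omega)$ near $\partial\Omega$; on the complementary compact subset of $\overline{\Omega}$ both $\D$ and $\mathrm{dist}(\cdot,\partial\Omega)$ are positive continuous functions bounded above and below, so $\D \asymp \mathrm{dist}(\cdot,\partial\Omega)$ holds throughout $\Omega$. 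Raising to the power $-\beta$ converts (\ref{ba}) and (\ref{bainv}) into equivalent conditions.

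For (iii), the goal reduces to producing $c>0$ with $\tilde{\D} \geq c\,\D$ on $\Omega$, since then $\tilde{\D}^{-\beta} \leq c^{-\beta}\D^{-\beta}$ and the claimed inequality follows from the hypothesis on $\B$. Testing $-\Delta\tilde{\D} \geq 0$ weakly against $\tilde{\D}^{-}$ gives $\tilde{\D} \geq 0$, and the strong maximum principle then upgrades this to $\tilde{\D} > 0$ in $\Omega$. Continuity and positivity of $\tilde{\D}$ on the compact set $\overline{B_r(y_0)}$, together with the boundedness of $\D$, let us choose $c>0$ with $\tilde{\D} \geq c\,\D$ on $\overline{B_r(y_0)}$. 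Setting $w := \tilde{\D} - c\,\D$, we have $w \in {\cal D}^{1,2}(\Omega)$ (since $\D \in H^{1}_{0}(\Omega)$), so $w^{-} \in {\cal D}^{1,2}(\Omega)$ is an admissible test function that vanishes on $\overline{B_r(y_0)}$. Testing the distributional inequality $-\Delta w \geq -c\phi$ against $w^{-}$, the right-hand side contributes zero because $w^{-}$ vanishes on the support of $\phi$, whereas the left-hand side gives $-\int|\nabla w^{-}|^{2}$; hence $w^{-} \equiv 0$ and $\tilde{\D} \geq c\,\D$ on all of $\Omega$.

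The main obstacle is (iii): one must carefully marshal the weak minimum principle in the ${\cal D}^{1,2}$-framework, exploiting that the non-harmonic contribution $-\Delta(c\,\D) = c\phi$ is absorbed precisely because the choice of $c$ on $\overline{B_r(y_0)}$ forces $w^{-}$ to vanish on the support of the bump.
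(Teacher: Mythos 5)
Your argument is correct, but it does not follow the paper's route in parts (i) and (iii). For (i) the paper writes $G(x,y)=\Gamma(x-y)+h_y(x)$ and gets the comparison $G(x,y)\asymp G(x,y_0)$ for $y$ near $y_0$ from the uniform continuity of $\Gamma$ away from its singularity together with the maximum principle applied to the harmonic corrections $h_y$ (a formulation chosen with unbounded $\Omega$ in mind, where the existence of $h_y$ is obtained by Perron's method or variational arguments); your Harnack-inequality shortcut is precisely the ``shorter proof'' the paper alludes to, and since it only uses positivity and harmonicity of $G(x,\cdot)$ on $B_{2r}(y_0)$ with a Harnack constant depending only on $N$, it is briefer and works equally well once $G$ is given. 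Part (ii) is essentially the paper's argument: the paper applies Hopf's lemma plus a compactness argument directly to $\D$ in a tubular neighbourhood of $\partial\Omega$ to get $c\,{\rm dist}\,(\cdot,\partial\Omega)\leq\D\leq C\,{\rm dist}\,(\cdot,\partial\Omega)$, whereas you route the estimate through $G(\cdot,y_0)$; either way the uniform two-sided bound near $\partial\Omega$ requires the small compactness/$C^1$-regularity step the paper spells out (Hopf alone is a pointwise statement), so you should not leave that implicit, and given that you already have $\D\in C^2(\overline{\Omega})$ it is cleaner to argue on $\D$ itself. For (iii) the paper picks $C$ with $\tilde{\D}\geq C\D$ on $\overline{B_\delta(y_0)}$ (positivity of $\tilde{\D}$ there via \cite[Theorem~8.19]{GT}) and then invokes the classical maximum principle on $\Omega\setminus\overline{B_\delta(y_0)}$, where $\D$ is harmonic and vanishes on $\partial\Omega$; your substitute---testing $-\Delta w\geq -c\phi$ for $w=\tilde{\D}-c\,\D$ against $w^-$, which vanishes on the support of the bump---is a weak-form version of the same comparison, and it has the merit of never interpreting boundary values of the ${\cal D}^{1,2}$ function $\tilde{\D}$ pointwise; the only point to note is that admitting $w^-$ as a test function in the distributional inequality $-\Delta\tilde{\D}\geq 0$ uses a routine density argument (nonnegative smooth compactly supported approximations), consistent with how the paper itself tests with $\tilde{\D}^-$.
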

\begin{proof} \ \\
{\bf (i)} Let
$$\Gamma(x)=\frac{1}{N(N-2)\omega_N}\cdot\frac{1}{|x|^{N-2}},$$
where $\omega_N$ is the volume of the unit ball in $\R^N$. The
function $\Gamma$ is uniformly continuous in $\R^N\setminus
B_r(0)$. This means for each $\eps>0$ there exists $0<\delta<r$
such that $y_1,y_2\in B_r(0)^C$ and $|y_1-y_2|<2\delta$ implies
$|\Gamma(y_1)-\Gamma(y_2)|<\eps$. If $y_1,y_2\in B_\delta(y_0)$
and $|x-y_1|\geq r$, $|x-y_2|\geq r$ then
$|\Gamma(x-y_1)-\Gamma(x-y_2)|<\eps$. Hence,
$$
y_1,y_2\in B_\delta(y_0)\ {\rm and}\
x\in\overline{\Omega}\setminus B_{r+\delta}(y_0)\ \Longrightarrow\
|\Gamma(x-y_1)-\Gamma(x-y_2)|<\eps.
$$
Green's function of the first kind for $\Omega$ is
$$
G(x,y)=\Gamma(x-y)+h_y(x),
$$
where
$$
\left\{\begin{array}{ll}
-\Delta h_y(x)=0&{\rm for}\ x\in\Omega,\\
h_y(x)=-\Gamma(x-y)&{\rm for}\ x\in\partial\Omega.
\end{array}\right.
$$
When $\Omega$ is unbounded, we further assume $h_y$ satisfies
$\lim_{x\to\infty}h_y(x)=0$. Then the existence of such a $h_y$
can be established by adapting Perron's method or applying
standard variational arguments. For $y_1,y_2\in B_\delta(y_0)$ and
$x\in\partial\Omega$, we have $|h_{y_1}(x)-h_{y_2}(x)|<\eps$, so
by the maximum principle
$$
y_1,y_2\in B_\delta(y_0)\ {\rm and}\
x\in\overline{\Omega}\setminus B_{r+\delta}(y_0)\ \Longrightarrow\
|G(x,y_1)-G(x,y_2)|<2\eps.
$$
One easily obtains $x\in\partial B_{r+\delta}(y_0)$ implies
$$
G(x,y_0)\geq\frac{1}{N(N-2)\omega_Nr^{N-2}}\left(\frac{1}{2^{N-2}}-\frac{1}{3^{N-2}}\right)
=:c>0.
$$
The value $c$ only depends on $r$ and $N$. Let
$$
C=\max_{x\in\partial B_{r+\delta}(y_0)}G(x,y_0).
$$
Choose $\eps=c/4$. We have,
$$
y\in B_\delta(y_0)\ {\rm and}\ x\in\partial B_{r+\delta}(y_0)\
\Longrightarrow\ \frac{c}{2}\leq G(x,y)\leq C+\frac{c}{2}.
$$
So $y\in B_\delta(y_0)$ and $x\in\partial B_{r+\delta}(y_0)$
implies
\begin{equation}\label{ys}
\frac{c}{2C}G(x,y_0)\leq G(x,y)\leq
\left(\frac{C}{c}+\frac{1}{2}\right)G(x,y_0).
\end{equation}
By the maximum principle the two inequalities of the last previous
line also hold for $x\in\overline{\Omega}\setminus
B_{r+\delta}(y_0)$. Let $\eta\in {\cal
D}\left(B_\delta(y_0)\right)$, $\eta\geq 0$ and $\int \eta=\rho>0$
and consider the function $\D\in {\cal D}(\overline{\Omega})$
defined by
\begin{equation}\label{defd}
\D(x)=\int G(x,y)\eta(y)\,dy.
\end{equation}
Multiplying (\ref{ys}) by $\eta(y)$ and integrating, for
$x\in\overline{\Omega}\setminus B_{r+\delta}(y_0)$,
$$
\rho\frac{c}{2C}G(x,y_0)\leq \D(x)\leq
\rho\left(\frac{C}{c}+\frac{1}{2}\right)G(x,y_0).
$$
Obviously $-\Delta \D=\eta$ in $\Omega$ and $\D=0$ on $\partial\Omega$.\\
{\bf (ii)} Let $(N_\sigma,{\rm proj})$ (with ${\rm
proj}:N_\sigma\to\partial\Omega$) be a tubular neighborhood of
$\partial\Omega$ in $\overline{\Omega}$ (see \cite[p.\ 35]{O})
with the length of the segment ${\rm proj}^{-1}(x)$ equal to
$\sigma$ for each $x\in\partial\Omega$. There exist $0<\sigma<{\rm
dist}\,(y_0,\partial\Omega)-2r$ and $c>0$ satisfying
\begin{equation}\label{Nd}
x\in N_\sigma\ \Longrightarrow\ -\,\frac{\partial
\D}{\partial\nu_{{\rm proj}\,x}}(x)\geq c.
\end{equation}
The vector $\nu_{{\rm proj}\,x}$ is the exterior outward unit
normal to $\partial\Omega$ at the point ${\rm proj}\,x$. Indeed,
suppose by contradiction there exist $\sigma_n\searrow 0$ and
$x_n\in N_{\sigma_n}$ satisfying
$$
-\,\frac{\partial \D}{\partial\nu_{{\rm
proj}\,x_n}}(x_n)\leq\frac{1}{n}.
$$
Modulo a subsequence, $x_n\to x_0\in\partial\Omega$. It follows
${\rm proj}\,x_n\to{\rm proj}\,x_0=x_0$, $\nu_{{\rm
proj}\,x_n}\to\nu_{{\rm proj}\,x_0}$ and $ -\,\frac{\partial
\D}{\partial\nu_{x_0}}(x_0)\leq 0. $ This contradicts Hopf's
Lemma. We have established (\ref{Nd}). Since $\D\in
C^2(\overline{\Omega})$, there exists $C>0$ such that
\begin{equation}\label{Ndt}
x\in N_\sigma\ \Longrightarrow\ -\,\frac{\partial
\D}{\partial\nu_{{\rm proj}\,x}}(x)\leq C.
\end{equation}
Given $x\in N_\sigma$, we integrate $\frac{\partial
\D}{\partial\nu_{{\rm proj}\,x}}$ along the part of the segment
${\rm proj}^{-1}({\rm proj}\,x)$ between ${\rm proj}\,x$ and $x$.
This part of ${\rm proj}^{-1}({\rm proj}\,x)$ has length ${\rm
dist}\,(x,\partial\Omega)$. Using (\ref{Nd}) and (\ref{Ndt}),
\begin{equation}\label{gx}
x\in N_\sigma\ \Longrightarrow\ c\,{\rm
dist}\,(x,\partial\Omega)\leq \D(x)\leq C\,{\rm
dist}\,(x,\partial\Omega).
\end{equation}
Suppose (\ref{ba}) holds. Using (\ref{gx}), $x\in N_\sigma\
\Rightarrow\ \B(x)\leq C\A(x)\, [\D(x)]^{-\beta}$. On the other
hand, there exist constants $c,C>0$ such that
$$
\overline{\Omega}\setminus N_\sigma\ \Longrightarrow\
c\,\frac{{\rm diameter(\Omega)}}{2}\leq \D(x) \leq C\sigma.
$$
As a consequence,
\begin{equation}\label{ddd}x\in
\overline{\Omega}\setminus N_\sigma\ \Longrightarrow\ c\,{\rm
dist}\,(x,\partial\Omega)\leq \D(x) \leq C{\rm
dist}\,(x,\partial\Omega).
\end{equation}
Taking into account (\ref{gx}) and (\ref{ddd}), we conclude
(\ref{ba}) and (\ref{bainv}) are equivalent.\\
{\bf (iii)} Suppose $\tilde{\D}\in {\cal D}^{1,2}(\Omega)$ is
continuous, $\tilde{\D}\not\equiv 0$ and $-\Delta\tilde{\D}\geq
0$. Multiplying the last inequality by $\tilde{\D}^-$ and
integrating, $\tilde{\D}^-\equiv 0$. From \cite[Theorem~8.19]{GT},
$\inf_{x\in B_{\delta}(y_0)}\tilde{\D}(x)>0$. Thus there exists
$C>0$ such that
\begin{equation}\label{ccc}
\tilde{\D}(x)\geq C\D(x)
\end{equation}
for $x\in\overline{B_{\delta}(y_0)}$. By the maximum principle, as
$\D$ is harmonic in $\Omega\setminus\overline{B_\delta(y_0)}$,
inequality (\ref{ccc}) also holds for
$x\in{\Omega}\setminus\overline{B_{\delta}(y_0)}$. So (\ref{ccc})
holds for $x\in{\Omega}$. The assertion follows.
\end{proof}

In the remainder of this section we suppose $\Omega$ is a smooth
bounded domain in $\R^N$, $N\geq 3$. We wish to prove the
existence of a positive solution to equation (\ref{eq}) where now
${\cal H}={\cal D}^{1,2}(\Omega)$. We introduce
\begin{enumerate}
\item[({\bf H}$\A$)$^{\prime\prime}$] The function
$\A:\Omega\to\R$ is positive and belongs to $L^\infty(\Omega)$.
\item[({\bf H}$\B$)$^{\prime\prime}$] The measurable function
$\B:\Omega\to\R$ is nonnegative, not identically equal to zero,
and satisfies
\begin{equation}\label{limsup2}
\bx \leq \overline{C}_1 \ax\, [{\rm dist}\,(\, \cdot\,
,\partial\Omega)]^{-\beta}.
\end{equation}
Let $B_0=\{x\in\Omega: \bxx =0\}$. We assume either $B_0$ has
measure zero, or $B_0=\overline{{\rm int}\,B_0}$ (closure in
$B_0$) with $\partial B_0$ Lipschitz. \item[({\bf
H}$\h$)$^{\prime\prime}$] The nonnegative and not identically
equal to zero function $\h$ belongs to the space $L^s(\R^N)$, for
some $s>N$.
\end{enumerate}
\begin{Rem}\label{po}
{\rm Proposition~\ref{exemplo}} generalizes to the case of a
bounded domain.
\end{Rem}
The proof is given in the Appendix.
\begin{Thm}\label{thm3}
Under\/ {\rm ({\bf H}$\A$)$^{\prime\prime}$}, {\rm ({\bf H}$g$)},
{\rm ({\bf H}$\B$)$^{\prime\prime}$}, {\rm ({\bf H}$\lambda$)}
and\/ {\rm ({\bf H}$\h$)$^{\prime\prime}$}, there exists\/
$\mu_4>0$ such that for all\/ $0\leq\mu\leq\mu_4$ equation\/ {\rm
(\ref{eq})} has a positive weak solution\/ $u_\mu\in {\cal H}\cap
C^{1,\alpha}(\Omega)$.
\end{Thm}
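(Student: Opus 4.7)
The plan is to parallel the development of Sections~\ref{especial}, \ref{psp} and~\ref{cinco}, with the Aubin--Talenti instanton replaced by the function $\D\in C^2(\overline{\Omega})$ constructed in Lemma~\ref{functiond}{\bf (i)}. By part~{\bf (ii)} of that lemma, hypothesis~({\bf H}$\B$)$^{\prime\prime}$ is equivalent to $\bx\leq C_1\ax\D(x)^{-\beta}$, which is precisely the form needed to mimic~(\ref{limsup}).

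First I would establish the bounded-domain analogue of Theorem~\ref{thm0}. As in Section~\ref{especial}, pick $l$ and $\llll=\varsigma l$ so that, for the choice $k(s)=s^\beta$, inequality~(\ref{menor}) holds. Because $\D$ is superharmonic in $\Omega$ and vanishes on $\partial\Omega$, $\llll\D$ is a supersolution of the truncated equation~(\ref{boundb}). Minimize $I_\mu$, defined as in~(\ref{I}) with all integrals taken over $\Omega$, on the weakly closed set $N=\{u\in{\cal H}:u\leq\llll\D\}$. The proofs of Lemmas~\ref{coercive} and~\ref{negative} go through verbatim, and the resulting minimizer $\hu$ satisfies the analogue of~(\ref{special}).

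The key step is producing a positive pointwise lower bound for $\hu$ that survives near $\partial\Omega$, so that it can serve as a subsolution of the full problem. Fix $\mu_2>0$ and an interior point $x_0$ at which $\hut>0$ (obtained as in Section~\ref{psp}). For $0\leq\mu\leq\mu_2$, re-minimize $I_\mu$ on $\{u\in{\cal H}:\hut\leq u\leq\llll\D\}$ to obtain a new solution $\bu$. The Green's function $G(\cdot,x_0)$ of $\Omega$ plays the role of $1/|x-x_0|^{N-2}$ in the comparison argument; letting $w$ be the Dirichlet Newtonian potential of $\h$, one has $w\in C^{1,\alpha}(\overline{\Omega})$ since $\h\in L^s$ with $s>N$. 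The same subtraction-and-test-function manipulation as in Section~\ref{psp} yields $\bu+\mu w\geq\eps G(\cdot,x_0)$ in $\Omega$. Combined with a bound $w\leq C\D$ (the bounded-domain substitute for Lemma~\ref{AO}, obtained from Hopf's lemma applied to $\D$ together with $w=0$ on $\partial\Omega$ and $w\in C^1(\overline{\Omega})$), we deduce $\bu\geq c\D$ in $\Omega$ for all sufficiently small $\mu$.

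With $\bu$ as subsolution, the argument of Section~\ref{cinco} proceeds essentially verbatim: define the truncations $j_m$ from~(\ref{jj}), minimize $I^m_\mu$ on $M_\mu=\{u\in{\cal H}:u\geq\bu\}$, and invoke the bounded-domain analogues of Lemmas~\ref{coercivity} and~\ref{differentiability}. The uniform coercivity of $I^m_\mu$ rests on~({\bf H}$\lambda$) exactly as in the $\R^N$ case, with~({\bf H}$\B$)$^{\prime\prime}$ in place of~({\bf H}$\B$) to localize the weak limit $v$ in $B_0$. A global $L^\infty$ bound via \cite[Theorem~8.17]{GT} together with the coercivity gives $\|u^m_\mu\|_{L^\infty(\Omega)}\leq C$ uniformly in $m$, so for $m$ large $u_\mu:=u^m_\mu$ solves~(\ref{eq}); elliptic regularity with $\h\in L^s$, $s>N$, then yields $u_\mu\in C^{1,\alpha}(\Omega)$. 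I expect the main obstacle to be the pointwise comparison $w\leq C\D$ near $\partial\Omega$, which substitutes for the decay estimate of Lemma~\ref{AO} and is what ultimately ensures $\bu$ is a strict subsolution rather than a merely nonnegative one.
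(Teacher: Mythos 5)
Your proposal is correct and follows essentially the same route as the paper: solve the truncated problem with the new $\D$ of Lemma~\ref{functiond}, produce a subsolution $\bu$ that is positive up to $\partial\Omega$ via a Green-function type comparison combined with $w\lesssim{\rm dist}(\,\cdot\,,\partial\Omega)$ (from $w\in C^{1,\alpha}(\overline{\Omega})$, $w=0$ on $\partial\Omega$), and then run the $j_m$-truncation and minimization over $\{u\geq\bu\}$ with a uniform $L^\infty$ bound. The only cosmetic deviations are that the paper performs the boundary comparison against a second smoothed Green's function $\hat{\D}$ built by a further application of Lemma~\ref{functiond}{\bf (i)} centered at $x_0$ (with the estimates (\ref{gx}), (\ref{ddd}) supplying what you get from Hopf's lemma for $G(\,\cdot\,,x_0)$), and it invokes the global bound \cite[Theorem~8.25]{GT} rather than the interior estimate \cite[Theorem~8.17]{GT} for the uniform bound on $u^m_\mu$.
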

\begin{proof}
We fix any $x_1\in\Omega$ and $r_1<{\rm
dist}\,(x_1,\partial\Omega)/3$. Let $\D$ be as in {\bf (i)} of
Lemma~\ref{functiond} with $y_0=x_1$ and $r=r_1$. By {\bf (ii)} of
the same Lemma, the function $\B$ satisfies (\ref{bainv}). We
repeat the arguments in Section~\ref{especial} but with this new
function $\D$. For any nonnegative $\mu$ we obtain a solution
$\hu\in C^{1,\alpha}(\overline{\Omega})$ to (\ref{special}). As in
Lemma~\ref{negative} there exist $\mu_5, \Ceight>0$ such that for
$0\leq\mu\leq\mu_5$, we have $\inf_NI_\mu\leq-\Ceight<0$ (with $N$
as in (\ref{N})). As in the beginning of Section~\ref{psp}, there
exists $0<\mu_6\leq\mu_5$ such that for all $0\leq\mu\leq\mu_6$
one can choose $x_0(\mu)$ where $\hu(x_0(\mu))>0$. In addition,
there exists $\rho>0$ such that
$$
\inf_{\overline{B_\rho(x_0(\mu_6))}}\hus>0.
$$
Let $r_0<\min\{\rho,{\rm dist}\,(x_0(\mu_6),\partial\Omega)/3\}$.
We again use {\bf (i)} of Lemma~\ref{functiond}, but this time
with $y_0=x_0(\mu_6)$ and $r=r_0$, to construct a function
$\hat{\D}\in C^2(\overline{\Omega})$, superharmonic in $\Omega$
and harmonic in $\Omega\setminus B_{r_0}(x_0(\mu_6))$ satisfying
(\ref{dup}). We fix $\eps>0$ sufficiently small such that
$$
\eps\hat{\D}(x)\leq \hus(x)\quad{\rm for}\ x\in
B_\rho(x_0(\mu_6)).
$$
Clearly,
$$
\eps\hat{\D}(x)\leq (\hus+\mu_6w)(x)\quad{\rm for}\ x\in
B_\rho(x_0(\mu_6)).
$$
with $w$ as in (\ref{h}). The maximum principle implies
$$
\eps\hat{\D}(x)\leq(\hus+\mu_6 w)(x)\quad{\rm for}\
x\in\Omega\setminus B_\rho(x_0(\mu_6)).
$$
As in Section~\ref{psp}, we use $\hus$ as a subsolution to
(\ref{special}) when $0\leq\mu\leq\mu_6$. We minimize $I_\mu$ over
the set
$$
\left\{u\in{\cal H}:\hus\leq u\leq \llll\D \ {\rm \, a.e.\ in}\
\R^N\right\},
$$
where $\llll$ is as in (\ref{llll}), to obtain new solutions $\bu$
of (\ref{special}) for $0\leq\mu\leq\mu_6$ with $I_\mu(\bu)<0$.
These solutions satisfy
\begin{equation}\label{quase}
\eps\hat{\D}\leq\bu+\mu w.
\end{equation}
Combining (\ref{gx}) and (\ref{ddd}), there exist constants
$c,C>0$ such that
\begin{equation}\label{quasez}
c\,{\rm dist}\,(\,\cdot\,,\partial\Omega)\leq\hat{\D}\leq C\,{\rm
dist}\,(\,\cdot\,,\partial\Omega).
\end{equation}
On the other hand, since $\h\in L^s(\Omega)$ with $s>N$, $w\in
C^{1,\alpha}(\overline{\Omega})$.
Thus from (\ref{quase}) and (\ref{quasez}) there exists
$0<\mu_7\leq\mu_6$ such that for all $0\leq\mu\leq\mu_7$ the
function $\bu$ is positive in $\Omega$. Now we argue as in
Section~\ref{cinco} and use $\bu$ as subsolutions to (\ref{eq}).
For $0\leq\mu\leq\mu_7$ and all integers $m\geq 1$, we obtain a
positive solution $u^m_\mu$ of (\ref{mmm}) with
$I_\mu^m(u^m_\mu)\leq I_\mu(\bu)<0$. This time we use
\cite[Theorem~8.25]{GT} to conclude the $u^m_\mu$ are uniformly
bounded. Choosing any sufficiently large $m$ we obtain a positive
solution to (\ref{eq}).
\end{proof}

\section{Further extensions}\label{extensions}

The results of the previous sections may be generalized to prove
the existence of a positive solution to the equation
\begin{equation}\label{eq2}
-\Delta u=\lambda\A[u-g(\,\cdot\,,u)]-\mu\h,\qquad u\in {\cal H}.
\end{equation}
We give two results related to Theorems~\ref{thm} and \ref{thm2}
whose proofs we leave to the reader. First we replace ({\bf H}$g$)
and ({\bf H}$\B$) by
\begin{enumerate}
\item[({\bf H}$g$)$_\D$] The function $g:\R^N\times\R\to\R_0^+$ is
Carathéodory, with $g(x,s)=0$ for $x\in\R^N$ and $s\leq 0$. Let
$B_0=\left\{x\in\R^N:g(x,s)=0\ {\rm for}\ s\in\R\right\}$. We
assume either $B_0$ has measure zero, or $B_0=\overline{{\rm
int}\,B_0}$ with $\partial B_0$ Lipschitz. Furthermore, $ g\in
L^\infty_{{\rm loc}}(\R^N\times\R) $,
\begin{equation}\label{so}
\limsup_{s\to 0}\frac{[\D(x)]^\beta
g(x,s)}{s^{1+\beta}}<\infty\quad{\rm uniformly\ for}\ x\in\R^N,
\end{equation}
where $\beta>0$ is a fixed constant and $\D$ is defined in
(\ref{def_d}), and
$$
\lim_{s\to +\infty}\frac{g(x,s)}{s}=+\infty\quad{\rm for\ each}\
x\in B_0^C.
$$
\end{enumerate}
\begin{Thm}\label{thm4}
Under\/ {\rm ({\bf H}$\A$)}, {\rm ({\bf H}$g$)$_\D$}, {\rm ({\bf
H}$\lambda$)} and\/ {\rm ({\bf H}$\h$)}, there exists\/ $\mu_0>0$
such that for all\/ $0\leq\mu\leq\mu_0$ equation\/ {\rm
(\ref{eq2})} has a positive weak solution $u_\mu\in {\cal H}\cap
C^{1,\alpha}_{{\rm loc}}(\R^N)$. Furthermore, there exists\/
$\Cthree>0$ such that for all\/ $0\leq\mu\leq\mu_0$ this weak
solution\/ $u_\mu$ satisfies
$$
u_\mu(x)\geq\frac{\Cthree}{|x|^{N-2}}\quad{\rm for\ large}\ |x|.
$$
\end{Thm}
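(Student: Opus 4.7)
The strategy is to mimic the proof of Theorem~\ref{thm}, now viewing $\lambda\A(x)g(x,\cdot)$ in the role of $\B(\cdot)g(\cdot)$. Condition~(\ref{so}) supplies $l>0$ and $s_0>0$ such that $\lambda\A(x)g(x,s)\leq \lambda\A(x)\,s\,k\!\left(\frac{s}{l\D(x)}\right)$ for all $x\in\R^N$ and $0<s\leq s_0$, where $k(s)=s^\beta$. Since ({\bf H}$k$) and ({\bf H}$\lambda$)$^\prime$ hold, Theorem~\ref{thm0} provides $\mu_0>0$ and a positive solution $\bu\in\mathcal{H}\cap C^{1,\alpha}_{\rm loc}(\R^N)$ of~(\ref{boundb}) with $\bu(x)\geq\Cthree/|x|^{N-2}$ for large $|x|$ and $\bu\leq\llll\D\leq s_0$. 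Combining this bound with the preceding inequality shows that $\bu$ is a subsolution of~(\ref{eq2}).

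Next I would truncate $g$ by setting, for $m\geq 1$ and a fixed $1<p\leq(N+2)/(N-2)$,
$$j_m(x,s)=\begin{cases} g(x,s)&\text{if }s\leq m,\\ g(x,m)-m^p+s^p&\text{if }s>m,\end{cases}$$
and let $J_m(x,s)=\int_0^s j_m(x,t)\,dt$. Defining
$$I^m_\mu(u)=\frac{1}{2}\|u\|^2-\frac{\lambda}{2}\int\A u^2+\lambda\int\A J_m(\,\cdot\,,u)+\mu\int\h u$$
on $M_\mu=\{u\in\mathcal{H}:u\geq\bu \text{ a.e.}\}$, the local boundedness $g\in L^\infty_{\rm loc}(\R^N\times\R)$ makes the argument of Lemma~\ref{differentiability} available with $\lambda\A$ playing the role of $\B$, so minimizers $u^m_\mu$ of $I^m_\mu$ on $M_\mu$ are weak solutions of the corresponding truncated equation.

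The main obstacle is proving coercivity of $I^m_\mu$ on $M_\mu$ uniformly in $m\geq 1$ and $\mu\in[0,\mu_0]$, reproducing Lemma~\ref{coercivity} in this $x$-dependent setting. Given a hypothetical contradicting sequence $u_n\in M_{\mu_n}$ with $\|u_n\|\to\infty$, I would set $c_n=\bigl(\int\A u_n^2\bigr)^{1/2}\to\infty$ and $v_n=u_n/c_n\weak v\geq 0$ in $\mathcal{H}$, and extract the bound
$$\frac{1}{2}\|v_n\|^2-\frac{\lambda}{2}+\frac{\lambda}{c_n^2}\int\A J_m(\,\cdot\,,c_nv_n)+o(1)\leq\frac{C}{c_n^2}.$$
The pointwise divergence $g(x,s)/s\to\infty$ on $B_0^C$ implies $J_m(x,s)/s^2\to\infty$ there, so Fatou's lemma forces $v\equiv 0$ a.e.\ on $B_0^C$. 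If $B_0$ has measure zero this contradicts $\int\A v^2=1$; otherwise $v\in\mathcal{D}^{1,2}({\rm int}\,B_0)$, and the inequality $\|v\|^2\leq\lambda$ read off from the same passage to the limit yields $\lambda_*\leq\lambda$, contradicting ({\bf H}$\lambda$).

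Once coercivity is in hand, $\|u^m_\mu\|$ is uniformly bounded, and the subsolution estimate~(\ref{unif}) delivers a uniform $L^\infty$ bound $\sup u^m_\mu\leq\Cseven$. Choosing any $m\geq\Cseven$ removes the truncation and produces a positive solution $u_\mu$ of~(\ref{eq2}). Since $u_\mu\geq\bu$, the decay estimate $u_\mu(x)\geq\Cthree/|x|^{N-2}$ for large $|x|$ follows from~(\ref{baixo}), and standard elliptic regularity (using $\h\in L^s$ with $s>N$ together with $g\in L^\infty_{\rm loc}$) gives $u_\mu\in C^{1,\alpha}_{\rm loc}(\R^N)$.
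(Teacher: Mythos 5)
Your proposal is correct and follows essentially the route the paper intends (the proof of Theorem~\ref{thm4} is left to the reader as a direct generalization of Sections~\ref{especial}--\ref{cinco}): use (\ref{so}) to reduce to (\ref{boundb}), take the solution $\bu$ from Theorem~\ref{thm0} as a subsolution of (\ref{eq2}), truncate, and remove the truncation via the uniform $L^\infty$ bound. The only step to make explicit is that in the uniform-in-$m$ coercivity argument the truncation index varies along the contradiction sequence ($m=m_n$), so, exactly as in Lemma~\ref{coercivity}, one should pass to $j(x,s)=\inf_{m\geq 1}j_m(x,s)$ and verify $j(x,s)/s\to+\infty$ pointwise for $x\in B_0^C$ before applying Fatou's lemma.
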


Now we replace ({\bf H}$g$), ({\bf H}$\B$) and ({\bf H}$\h$) as
follows:
\begin{enumerate}
\item[({\bf H}$g$)$_\varUpsilon$] The function
$g:\R^N\times\R\to\R_0^+$ is continuous, with $g(x,s)=0$ for
$x\in\R^N$ and $s\leq 0$. Let $B_0=\left\{x\in\R^N:g(x,s)=0\ {\rm
for}\ s\in\R\right\}$. We assume either $B_0$ has measure zero, or
$B_0=\overline{{\rm int}\,B_0}$ with $\partial B_0$ Lipschitz.
Furthermore, $g\in L^\infty_{{\rm loc}}(\R^N\times\R)$,
$$ 
\lim_{s\to 0}\frac{g(x,s)}{s}=0\quad{\rm uniformly\ for}\ x\ {\rm
in\ compact\ subsets\ of}\ \R^N,
$$ 
and
$$
\lim_{s\to +\infty}\frac{g(x,s)}{s}=+\infty\quad{\rm for\ each}\
x\in B_0^C.
$$
\item[({\bf H}$\h$)$^{\prime\prime\prime}$] The measurable,
nonnegative and not identically equal to zero function $\h$ has
compact support
and there exists a constant $C>0$ such that $\h\leq C\A$.
\end{enumerate}
\begin{Thm}\label{thm5}
Under\/ {\rm ({\bf H}$\A$)}, {\rm ({\bf H}$g$)$_\varUpsilon$},
{\rm ({\bf H}$\lambda$)} and\/ {\rm ({\bf
H}$\h$)$^{\prime\prime\prime}$}, there exists\/ $\mu_3>0$ such
that for all\/ $0\leq\mu\leq\mu_3$ equation\/ {\rm (\ref{eq2})}
has a positive weak solution \/ $u_\mu\in {\cal H}\cap
C^{1,\alpha}_{{\rm loc}}(\R^N)$.
\end{Thm}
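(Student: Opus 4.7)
The plan is to parallel the proof of Theorem~\ref{thm2}, with minor adaptations for the spatially dependent nonlinearity $g(x,u)$ in place of the product $\B(x)g(u)$. Under ({\bf H}$\B$)$^\prime$ one has $\B g(u)=\lambda\A\varUpsilon(x)g(u)$, so the pair $\varUpsilon g$ of Theorem~\ref{thm2} plays the role of the single function $g(x,\cdot)$ appearing in (\ref{eq2}). The natural modification is to set
$$\tilde{g}(x,u):=g(x,u)+(u^+)^2,$$
which is positive for every $u>0$ and every $x\in\R^N$ (including on $B_0$), and satisfies $\tilde{g}(x,s)/s^2\to+\infty$ as $s\to+\infty$ uniformly in $x$.

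First I would consider the auxiliary problem
$$-\Delta u=\lambda\A u-2\lambda\A\,\tilde{g}(x,u),$$
and apply to it the truncation/minimization machinery of Section~\ref{cinco} on the cone $M=\{u\in{\cal H}:u\geq 0\text{ a.e.}\}$. Because the effective coefficient $\lambda\A$ is strictly positive everywhere, the coercivity argument of Lemma~\ref{coercivity} simplifies: the analogue of $B_0$ is empty, so the weak limit $v$ of any unbounded sequence must vanish a.e., contradicting the normalization $\int\A v^2=1$. Lemma~\ref{differentiability} and the subsequent discussion then deliver a nonnegative solution $\uu\in C^{1,\alpha}_{\rm loc}(\R^N)$, nontrivial because its energy is strictly negative (as in Lemma~\ref{negative}, exploiting ({\bf H}$\lambda$) and the uniform-on-compacts condition $g(x,s)/s\to 0$ at $s=0$).

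Next, I would establish positivity of $\uu$. Rewriting the auxiliary equation as $-\Delta u=\lambda\A u\,[1-2k(x,u)]$ with $k(x,u):=\tilde{g}(x,u)/u$ for $u>0$ and $k(x,0):=0$, the uniformity at zero makes $k$ continuous on $\R^N\times\R^+_0$. If $\uu(x_0)=0$ for some $x_0$, then in a ball $B_r(x_0)$ we have $1-2k(x,\uu(x))>0$, hence $-\Delta\uu\geq 0$ there; by \cite[Theorem~8.19]{GT} $\uu\equiv 0$ on $B_r(x_0)$, and by the unique continuation principle \cite[p.~519]{simon} $\uu\equiv 0$ on $\R^N$, contradicting $\uu\not\equiv 0$. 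Since $\h$ has compact support and $\uu$ is continuous, there exists $c>0$ with $\uu\geq c$ on the support of $\h$, so $\tilde{g}(x,\uu(x))\geq c^2$ there. Setting $\mu_3:=\lambda c^2/\Cnine$, with $\Cnine$ the constant from ({\bf H}$\h$)$^{\prime\prime\prime}$, for $0\leq\mu\leq\mu_3$ one has $\mu\h\leq\mu\Cnine\A\leq\lambda c^2\A\leq\lambda\A\tilde{g}(x,\uu)$ on the support of $\h$, and trivially elsewhere. Combining with $\tilde{g}\geq g$ gives
$$-\Delta\uu=\lambda\A\uu-2\lambda\A\tilde{g}(x,\uu)\leq\lambda\A\uu-\lambda\A g(x,\uu)-\mu\h,$$
so $\uu$ is a subsolution of (\ref{eq2}).

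Finally I would invoke the truncation argument of Section~\ref{cinco} to complete the construction: define cutoffs $j_m(x,u)$ agreeing with $g(x,\cdot)$ for $u\leq m$ and equal to $g(x,m)-m^p+u^p$ beyond, minimize the corresponding functional over $M_\mu=\{u\in{\cal H}:u\geq\uu\}$, and use uniform coercivity (as in Lemma~\ref{coercivity}) together with the $L^\infty$-bound from \cite[Theorem~8.17]{GT} to conclude that, for all sufficiently large $m$, the minimizer actually solves (\ref{eq2}); elliptic regularity then yields $u_\mu\in C^{1,\alpha}_{\rm loc}(\R^N)$. The principal delicate point will be the positivity step for $\uu$: the uniform-on-compacts smallness of $g(x,s)/s$ near $s=0$ required in ({\bf H}$g$)$_\varUpsilon$ is exactly what is needed to ensure continuity of $k(x,u)$ at the zero level, and hence the validity of the local inequality $1-2k(x,\uu(x))>0$ near a hypothetical zero; the coercivity step, by contrast, is easier here than in Lemma~\ref{coercivity} because no alternative involving $B_0$ arises.
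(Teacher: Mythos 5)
Your proposal is correct and is essentially the argument the paper intends: Theorem~\ref{thm5} is stated with its proof left to the reader as a direct adaptation of Theorem~\ref{thm2}, and your adaptation (auxiliary equation with $\tilde g(x,u)=g(x,u)+(u^+)^2$ and coefficient $\lambda\A$, positivity via \cite[Theorem~8.19]{GT} plus unique continuation using the uniform-on-compacts vanishing of $g(x,s)/s$ at $s=0$, the subsolution step for $0\leq\mu\leq\lambda c^2/\Cnine$, and then the truncation/minimization scheme of Section~\ref{cinco}) matches that route point by point. One small correction: $\tilde g(x,s)/s^2=g(x,s)/s^2+1$ need not tend to $+\infty$; what is actually needed (and holds, uniformly in $x$) is $\tilde g(x,s)/s\geq s\to+\infty$, which is what yields the superquadratic growth of the truncated primitives driving the coercivity argument.
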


\section{Appendix}\label{app}

\begin{altproof}{Proposition~\ref{exemplo}} \ \\
\noindent {\rm\bf (i)} We choose an $R>0$ such that
$B_R(0)\setminus B_0\neq\emptyset$. If the restriction  of $g$ to
$\R^+$ is positive, then $\B g(u)\chi_{B_R(0)}\not\equiv 0$. For
all $v\in{\cal D}(\R^N)$ with $v\geq 0$
\begin{equation}\label{lambdau}
\int\nabla u\cdot\nabla v\leq \lambda\int\A uv-\int\B
g(u)\chi_{B_R(0)}v-\mu\int \hx v.
\end{equation}
So (\ref{lambdau}) holds for all $v\in{\cal H}$ with $v\geq 0$.
Taking $v=u$ we obtain
$$\|u\|^2\leq\lambda\int \A u^2-\int\B g(u)u\chi_{B_R(0)}-\mu\int \hx u\leq\lambda\int \A u^2
$$
and the last inequality is strict if $\mu>0$ or if the restriction
of $g$ to $\R^+$ is positive. The conclusion follows.

\noindent {\rm\bf (ii)} Suppose $\h=0$ on $B_0$. We write
$u=u_0+u^\perp$ where $u_0|_{{\rm int}\,B_0}$ is the projection of
$u$ on ${\cal D}^{1,2}({\rm int}\,B_0)$ and $u_0=0$ on $({\rm
int}\,B_0)^C$. This means $u_0|_{{\rm int}\,B_0}\in {\cal
D}^{1,2}({\rm int}\,B_0)$ and
$$
\int\nabla u\cdot\nabla v=\int\nabla u_0\cdot\nabla v\qquad{\rm
for\ all}\ v\in{\cal D}^{1,2}({\rm int}\,B_0).
$$
The function $u^\perp:=u-u_0$ so that $u=u^\perp$ on $({\rm
int}\,B_0)^C$. Note
$$
\int\nabla u^\perp\cdot\nabla v=\int\nabla (u-u_0)\cdot\nabla
v=0\qquad{\rm for\ all}\ v\in{\cal D}^{1,2}({\rm int}\,B_0),
$$
which means that $u^\perp$ is harmonic in ${\rm int}\,B_0$. Since
$u$ is superharmonic in ${\rm int}\,B_0$ and $u^\perp$ is harmonic
in ${\rm int}\,B_0$, $u_0$ is superharmonic in ${\rm int}\,B_0$.
Thus $u_0$ is nonnegative. The function $u_0$ cannot be
identically zero. Otherwise in ${\rm int}\,B_0$ we would have
$0=-\Delta u^\perp=-\Delta u=\lambda\A u^\perp$. This implies
$u^\perp\equiv 0$ in ${\rm int}\, B_0$ and so $u\equiv 0$ in ${\rm
int}\, B_0$, contradicting the fact that $u$ is positive. The
function $u$ has a positive trace on $\partial B_0$. Also
$u=u^\perp$ on $\partial B_0$. So from $u^\perp\in{\cal H}$,
clearly $(u^\perp)^-|_{{\rm int}\,B_0}\in{\cal D}^{1,2}({\rm
int}\,B_0)$, and hence $(u^\perp)^-|_{{\rm int}\,B_0}\equiv 0$. By
the strong maximum principle $u^\perp>0$ on $B_0$. Let
\begin{equation}\label{phi}
\left\{\begin{array}{ll}
        -\Delta\phi_1^*=\lambda_*\A\phi_1^*&{\rm in}\ {\rm int}\,B_0,\\
        \phi_1^*>0&{\rm in}\ {\rm int}\,B_0,\\
    \phi_1^*=0&{\rm on}\ ({\rm int}\, B_0)^C.
      \end{array}
\right.
\end{equation}
One can easily see we may also take $v$ such that
$v|_{B_0}=\phi_1^*$ and $v|_{B_0^C}=0$ in (\ref{int}). Indeed,
this follows from $\B\in L^\infty_{{\rm loc}}(\R^N)$ and
$\phi_1^*|_{{\rm int}\,B_0}\in {\cal D}^{1,2}({{\rm int}}\,B_0)$.
We obtain
$$
\int\nabla u_0\cdot\nabla\phi_1^*+\int\nabla
u^\perp\cdot\nabla\phi_1^*= \lambda\int\A
u_0\phi_1^*+\lambda\int\A u^\perp\phi_1^*.
$$
This yields
$$
\lambda_*\int\A u_0\phi_1^*= \lambda\int\A
u_0\phi_1^*+\lambda\int\A u^\perp\phi_1^*>\lambda\int\A
u_0\phi_1^*,
$$
and so $\lambda<\lambda_*$.

\noindent {\rm\bf (iii)} We give functions $\A$, $\B$, $g$, $\h$
(with $\h\not\equiv 0$ on $B_0$), and a function $u\in{\cal H}$
which is a positive solution of (\ref{eq}) for
$\lambda=\lambda_*+\mu$. Here $\mu> 0$ is the parameter in
(\ref{eq}). Since all functions will be radially symmetric, we
introduce the coordinate $r=|x|$ and write them in terms of $r$.
We choose the set $B_0=\left\{x\in\R^N:r\leq 1\right\}$. The
functions $\A$ and $g$ are
$$
\A(r) =\left\{\begin{array}{ll}
        1&{\rm for}\ r\leq 1,\\
    \frac{1}{r^{(N-2)\beta}}&{\rm for}\ r>1,
      \end{array}
\right.
$$
$$
g(u) =\left\{\begin{array}{ll}
        0&{\rm for}\ u\leq 0,\\
    u^{1+\beta}&{\rm for}\ u>0,
      \end{array}
\right.
$$
with $\beta>2$. We define $u$ using (\ref{phi}),
$$
u(r) =\left\{\begin{array}{ll}
        \phi_1^*+\kappa&{\rm for}\ r\leq 1,\\
    \frac{\kappa}{r^{N-2}}&{\rm for}\ r>1,
      \end{array}
\right.
$$
with $\kappa=\left.-\frac{1}{N-2}\frac{\partial\phi_1^*}{\partial
r}\right|_{r=1}$ so that $u\in C^1(\R^N)$. This is possible
because $\phi_1^*$ is spherically symmetric (\cite{GNN}) and
$\left.\frac{\partial\phi_1^*}{\partial r}\right|_{r=1}<0$ (by
Hopf's lemma). The functions $\B$ and $\h$ are
$$
\B(r) =\left\{\begin{array}{ll}
        0&{\rm for}\ r\leq 1,\\
    \frac{\lambda}{\kappa^\beta}&{\rm for}\ r>1,
      \end{array}
\right.
$$
$$
\mu\h(r) =\left\{\begin{array}{ll}
        \mu\phi_1^*(r)+\lambda\kappa&{\rm for}\ r\leq 1,\\
    0&{\rm for}\ r>1.
      \end{array}
\right.
$$
Our assumptions are all satisfied except for ({\bf H}$\lambda$) of
course. In particular, the function $\A$ is positive and belongs
to $L^{N/2}(\R^N)\cap L^\infty(\R^N)$. The measurable function
$\B$ is nonnegative, not identically equal to zero, and satisfies
(\ref{limsup}) for $C_1=\frac{\lambda}{\kappa^\beta}$ as $\ax
{\dx}^{-\beta}>1$. Note also $u\in{\cal H}$. The function $u$
satisfies (\ref{eq}) in $B_1(0)$ and in $\overline{B_1(0)}^C$. In
fact, for $r<1$,
$$
-\Delta(\phi_1^*+\kappa)=\lambda\cdot
1\cdot(\phi_1^*+\kappa)-0-(\mu\phi_1^*+\lambda\kappa)
=\lambda_*\phi_1^*.
$$
For $r>1$,
$$
0=\lambda\frac{1}{r^{(N-2)\beta}}\frac{\kappa}{r^{N-2}}
-\frac{\lambda}{\kappa^\beta}\frac{\kappa^{1+\beta}}{r^{(N-2)(1+\beta)}}-0.
$$
Let $v\in{\cal D}(\R^N)$. We recall $u\in C^1(\R^N)$. Multiplying
(\ref{eq}) by $v$ and integrating over $B_1(0)$ we obtain
\begin{equation}\label{intu}
-\int_{\partial B_1(0)}\frac{\partial u}{\partial r}v
+\int_{B_1(0)}\nabla u\cdot\nabla v=\lambda\int_{B_1(0)} \ax
uv-\int_{B_1(0)} \bx g(u)v-\mu\int_{B_1(0)} \hx v.
\end{equation}
Multiplying (\ref{eq}) by $v$ and integrating over
$\overline{B_1(0)}^C$ we obtain
\begin{equation}\label{intd}
\int_{\partial B_1(0)}\frac{\partial u}{\partial r}v
+\int_{\overline{B_1(0)}^C}\nabla u\cdot\nabla
v=\lambda\int_{\overline{B_1(0)}^C} \ax uv
-\int_{\overline{B_1(0)}^C} \bx
g(u)v-\mu\int_{\overline{B_1(0)}^C} \hx v.
\end{equation}
Adding (\ref{intu}) and (\ref{intd}), the function $u$ is a
positive weak solution of (\ref{eq}).
\end{altproof}

\begin{altproof}{Remark~\ref{po}}
The proof of items {\bf (i)} and {\bf (ii)} is similar to the case
of the space $\R^N$. To check item {\rm\bf (iii)} let
$\Omega=B_2(0)$. We may take
$$
\A(r) =\left\{\begin{array}{ll}
        1&{\rm for}\ r\leq 1,\\
    \left(\frac{1}{r^{N-2}}-\frac{1}{2^{N-2}}\right)^{\beta}&{\rm for}\ 1<r<2,
      \end{array}
\right.
$$
$$
u(r) =\left\{\begin{array}{ll}
        \phi_1^*+\kappa\left(1-\frac{1}{2^{N-2}}\right)&{\rm for}\ r\leq 1,\\
    \kappa\left(\frac{1}{r^{N-2}}-\frac{1}{2^{N-2}}\right)&{\rm for}\ 1<r<2,
      \end{array}
\right.
$$
$$
\mu\h(r) =\left\{\begin{array}{ll}
        \mu\phi_1^*(r)+\lambda\kappa\left(1-\frac{1}{2^{N-2}}\right)&{\rm for}\ r\leq 1,\\
    0&{\rm for}\ 1<r<2,
      \end{array}
\right.
$$
and all the parameters and other functions as in the proof of
Proposition~\ref{exemplo}. There exists $\overline{C}_1>0$ such
that (\ref{ba}) holds because
$$
0<\lim_{r\to
2}\left[\left(\frac{1}{r^{N-2}}-\frac{1}{2^{N-2}}\right)\frac{1}{2-r}\right]^\beta<\infty.
$$
\end{altproof}

\end{document}